\newtheorem{tm}{Theorem}
\newtheorem{defi}{Definition}
\newtheorem{rem}{Remark}
\newtheorem{rems}{Remarks}
\begin{document}
\title{Sign patterns and rigid moduli orders}
\author{Yousra Gati, Vladimir Petrov Kostov  and Mohamed Chaouki Tarchi}

\address{Universit\'e de Carthage, EPT-LIM, Tunisie}
\email{yousra.gati@gmail.com}
\address{Universit\'e C\^ote d’Azur, CNRS, LJAD, France}
\email{vladimir.kostov@unice.fr}
\address{Universit\'e de Carthage, EPT-LIM, Tunisie}
\email{mohamedchaouki.tarchi@gmail.com}
\begin{abstract}
  We consider the set of monic degree $d$ real univariate polynomials
  $Q_d=x^d+\sum _{j=0}^{d-1}a_jx^j$ and
  its {\em hyperbolicity domain} $\Pi _d$, i.e. the subset of values of the
  coefficients $a_j$ for which the polynomial $Q_d$ has all roots real.
  The subset $E_d\subset \Pi _d$ is the one on which a modulus of a negative root of
  $Q_d$ is equal to a positive root of $Q_d$. At a point, where $Q_d$ has $d$
  distinct roots with exactly $s$ ($1\leq s\leq [d/2]$) equalities
  between positive roots and moduli of negative roots, the set $E_d$ is locally
  the transversal intersection of $s$ smooth hypersurfaces.
  At a point, where $Q_d$ has two double opposite roots and no other equalities between
  moduli of roots, the set $E_d$ is locally the direct product of $\mathbb{R}^{d-3}$ and
  a hypersurface in $\mathbb{R}^3$ having a Whitney umbrella singularity. For $d\leq 4$,
  we draw pictures of the sets $\Pi _d$ and~$E_d$.\\

  {\bf Key words:} real polynomial in one variable; hyperbolic polynomial; sign
  pattern; Descartes'
rule of signs\\

{\bf AMS classification:} 26C10; 30C15
\end{abstract}
\maketitle

\section{Introduction}

We consider {\em hyperbolic polynomials (HPs)}, i.e. real univariate
polynomials with all roots real. For degree $\leq 4$, we discuss the question
about the signs of the coefficients of such polynomials and the order of the
moduli of their roots on the real positive half-line. We give a geometric
illustration of this by representing the general families of monic HPs of
such degrees.

Recall that for a real (not necessarily hyperbolic)
polynomial $Q$, Descartes' rule of signs states that the number $pos$ of its
positive roots is not more than $c$, the number of sign changes in the
sequence of its coefficients; the difference $c-pos$ is even, see~\cite{Fo}. Hence
the number of sign changes $c'$ in the sequence of coefficients of $Q(-x)$
majorizes the number $neg$ of negative roots of $Q$ and
$c'-neg\in 2\mathbb{Z}$. In the case when $Q$ has no vanishing coefficient
one has $c'=p$ (the number of sign preservations in the sequence of the
coefficients of the polynomial $Q$).

If $Q$ is hyperbolic and with no vanishing coefficient these conditions
imply $pos=c$ and $neg=p$. A HP with non-zero constant term has no two
consecutive vanishing
coefficients, see Remark~2 in~\cite{KorigMO}.

\begin{defi}
  {\rm (1) A {\em sign pattern (SP)} is a finite sequence of $(+)$- and/or
    $(-)$-signs. We say that the polynomial $x^d+\sum _{j=0}^{d-1}a_jx^j$
    defines (or realizes) the SP
    $(+,{\rm sgn}(a_{d-1}),\ldots ,{\rm sgn}(a_0))$. Sometimes,
    when we allow vanishing of some coefficients, the corresponding components
    of the SP defined by the HP are zeros. In this case we speak about
    {\em sign pattern admitting zeros (SPAZ)}.

    (2) We say that
    the roots of the HP $Q$ define a
    {\em moduli order (MO)} on the real positive half-line. The exact
    definition of a MO should be clear from the following example. Suppose that
    the degree $6$ HP has positive roots $\alpha _1<\alpha _2$ and negative
    roots $-\gamma _i$, where

    $$\gamma _1<\alpha _1<\gamma _2<\gamma _3<\alpha _2<\gamma _4~.$$
    The roots of the HP define the MO $N<P<N<N<P<N$. Sometimes we
    consider HPs with equal moduli of some roots and/or with roots at $0$.
    In this case we speak about {\em moduli order admitting equalities (MOAE)}.
    If the degree $8$ HP has a double root at $0$, positive roots
    $\alpha _1<\alpha _2$ and negative
    roots $-\gamma _i$, where

    $$\gamma _1=\alpha _1<\gamma _2=\gamma _3<\alpha _2<\gamma _4~,$$
    then we say that the roots define the MOAE $0=0<N=P<N=N<P<N$. }
    \end{defi}

There are two particular cases of MOs which are of interest to us:

\begin{defi}
  {\rm (1) Each SP (not containing zeros) defines the corresponding
    {\em canonical MO} as follows. The SP is read from the back and to each
    two consecutive equal (resp. opposite) signs one puts in correspondence
    the letter $N$ (resp. the letter $P$) after which one inserts between the
    letters
    the signs $<$. E.g. reading the SP $(+,-,+,-,+,+,-,-,+)$ from the back
    yields $(+,-,-,+,+,-,+,-,+)$ and the canonical MO is
    $P<N<P<N<P<P<P<P$. These SP and MO correspond to degree $8$ HPs.
    SPs which are realizable only by HPs with roots defining the
    corresponding canonical MOs are called {\em canonical}.

    (2) When all polynomials with roots defining one and the same MO define
    one and the same SP, we say that this MO is {\em rigid}.}
  \end{defi}

\begin{rems}\label{remscanrig}
  {\rm (1) Some necessary and some sufficient conditions for a SP to be
    canonical are formulated in~\cite{KoSe}. In particular, the SP
    $(+,-,+,-,+,-,\ldots )$ and the all-units
    SP are canonical.

    (2) There are two kinds of rigid MOs (see~\cite{KorigMO}):

    (a) the ones in
    which all roots are of the same sign (this is the so-called trivial case,
    the SP either equals $(+,-,+,-,+,-,\ldots )$ or is the all-units
    SP)
    and

    (b) the ones in which moduli of negative roots interlace with positive
    roots (hence half or about half of the roots are positive and the rest
    are negative). In this case the SP is of one of the forms}

    $$(+,+,-,-,+,+,-,-,+,\ldots )~~~\, \, {\rm or}~~~\, \,
  (+,-,-,+,+,-,-,+,+,\ldots )~.$$
\end{rems}

\begin{defi}\label{defiEd}
  {\rm For the general family of monic real degree $d$ polynomials
    $Q_d:=x^d+\sum _{j=0}^{d-1}a_jx^j$ we define the {\em hyperbolicity domain}
    $\Pi _d$ as the set of values of the parameters $a_j$
    for which the polynomial
    $Q_d$ is hyperbolic. We denote by $E_d$ (resp. $F_d$ or $G_d$) the subsets
    of $\mathbb{R}^d\cong Oa_0\ldots a_{d-1}$
    for which a negative
    and a positive root have equal moduli (resp. for which there is a complex
    conjugate pair of purely imaginary
    roots or a double root at~$0$). We set $\tilde{E}_d:=E_d\cup F_d \cup G_d$.
    We denote by $\Delta _d$ the {\em discriminant set}
    Res$(Q_d,Q_d',x)=0$. This is the set of values of the
    coefficients $a_j$ for which the polynomial $Q_d$ has a multiple root.}
\end{defi}

Our first result is the following theorem (proved in Section~\ref{secprtmEd}).

\begin{tm}\label{tmEd}
  (1) At a point of $\Pi _d$, where $Q_d$ has $d$ distinct real roots with
  exactly one
  equality between a positive root and a modulus of a negative root, the
  set $E_d$ is locally a smooth hypersurface.

  (2) At a point of $\Pi _d$, where $Q_d$ has $d$ distinct roots with exactly
  $s$ ($2s\leq d$) equalities
  between positive roots and moduli of negative roots, the set $E_d$ is locally
  the transversal intersection of $s$ smooth hypersurfaces (transversality
  means that the $s$ normal vectors are linearly independent).

  (3) Suppose that the polynomial $Q_d$ has $d$ distinct roots, but is not
  necessarily hyperbolic. Suppose that at a point of $\mathbb{R}_d$ it
  has $s_1$ equalities between positive roots and moduli of negative roots
  and $s_2$ conjugate pairs of purely imaginary roots, $2(s_1+s_2)\leq d$
  and no other equalities between moduli of roots. Then at this point
  the set $\tilde{E}_d$ is locally the transversal intersection of
  $s_1+s_2$ smooth hypersurfaces.

  (4) At a point, where $Q_d$ has two double opposite real roots or a
  double conjugate pair of purely imaginary roots
  (all other moduli of roots being distinct and different from these ones) the set
  $\tilde{E}_d$ is locally diffeomorphic to the direct product
  of $\mathbb{R}^{d-3}$ and a hypersurface in $\mathbb{R}^3$ having a
  Whitney umbrella singularity.
\end{tm}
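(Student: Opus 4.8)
The plan is to prove all four parts from one local device and then concentrate on part~(4), where the genuinely new phenomenon occurs. First I would use that roots (or root-clusters) of $Q_d$ carrying distinct moduli are separated in the plane: applying the implicit function theorem to the relevant resultants, $Q_d$ factors locally and real-analytically as a product of monic factors, one per cluster, and the map sending the coefficients of these factors to the global coefficients $a_0,\dots ,a_{d-1}$ is a local diffeomorphism. For parts~(1)--(3) this isolates each modulus-equality: a single equality $\xi=-\eta$ between a simple positive root and a simple negative root is the zero set of the real-analytic function $\xi(a)+\eta(a)$, whose differential never vanishes (the gradient of a simple root is nonzero), so $E_d$ is a smooth hypersurface; transversality of $s$ (resp.\ $s_1+s_2$) such conditions reduces to linear independence of the corresponding root-differentials, a Vandermonde-type computation, the purely imaginary pairs being handled identically through the vanishing of a real part.

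For part~(4) the same factorization reduces everything to the single monic quartic factor $h(x)=x^4+c_3x^3+c_2x^2+c_1x+c_0$ carrying the two coincident moduli, times the trivial $\mathbb{R}^{d-4}$ coming from the remaining well-separated roots. The key observation I would exploit is that \emph{both} degenerate configurations are instances of one condition: $h$ possesses a pair of opposite roots $\pm r$. Indeed $h(r)=h(-r)=0$ is equivalent, after setting $Z:=r^2$ and discarding $r\neq 0$, to the pair of equations $Z^2+c_2Z+c_0=0$ and $c_3Z+c_1=0$; the choice $Z>0$ gives opposite real roots (the locus $E_d$, case of double opposite real roots $\pm a$ with $Z_0=a^2$), while $Z<0$ gives a purely imaginary pair (the locus $F_d$, case of a double imaginary pair $\pm bi$ with $Z_0=-b^2$). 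At the base point $Z^2+c_2Z+c_0$ has the double root $Z_0\neq 0$, which is exactly the merging of two of the smooth hypersurfaces of parts~(2)--(3) into one singular set.

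Next I would eliminate $Z$ by taking the resultant of the two equations, obtaining the single real-analytic defining equation $c_1^2-c_1c_2c_3+c_0c_3^2=0$ for the relevant component of $\tilde{E}_d$. Centering the coordinates at the base point and completing the square in the variable conjugate to $c_1$ turns this, after an explicit sequence of local diffeomorphisms of $\mathbb{R}^4$, into the normal form $\zeta^2=\kappa\,\varepsilon^2$ in three of the coordinates, the fourth coordinate (essentially $c_2$) dropping out entirely. This is precisely the Whitney umbrella, and since that free coordinate together with the $\mathbb{R}^{d-4}$ of the other roots forms a free $\mathbb{R}^{d-3}$, we obtain $\tilde{E}_d\cong\mathbb{R}^{d-3}\times\{\zeta^2=\kappa\varepsilon^2\}$ locally.

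The hard part, and the step I would treat most carefully, is the semialgebraic bookkeeping that makes this identification honest rather than merely formal. The equation above is only the Zariski closure; one must check that the reality and sign constraints select exactly the umbrella's canopy together with its pinch point, and not the extra whisker, and that no spurious branch appears along $c_3=0$. Concretely I would verify that $\kappa$ is a smooth multiple of the discriminant $c_2^2-4c_0$, so that $\kappa\geq 0$ corresponds to genuine opposite real (resp.\ purely imaginary) roots while $\kappa<0$ yields a complex quadruple $\pm\alpha\pm i\beta$ belonging to none of $E_d,F_d,G_d$; this is what excludes the whisker. I would likewise confirm that the competing components are locally empty --- $F_d$ and $G_d$ near $\pm a$, and $E_d$ and $G_d$ near $\pm bi$ --- since the relevant discriminant keeps a fixed sign in a neighborhood, so that $\tilde{E}_d$ genuinely coincides with the single opposite-pair locus there. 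The conceptual obstacle throughout is that the transversality argument of parts~(1)--(3) breaks down exactly where two moduli coincide: the square-root branching of $Z$, equivalently the swap symmetry of the two coincident quadratic factors of $h$, is the mechanism that forces a cross-cap instead of a transverse crossing, and making the governing diffeomorphism explicit is what must replace the now-degenerate implicit function theorem.
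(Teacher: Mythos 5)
Your proposal is correct, and on the crucial part (4) it follows essentially the paper's own route: the paper likewise splits off the quartic factor via the product-diffeomorphism lemma of M\'eguerditchian, obtains exactly your equation $c_1^2-c_1c_2c_3+c_0c_3^2=0$ (its $\Phi =a^2h+(c-ab)c$, derived from the parametrization $Q_4=(x^2+A)(x^2+ux+v)$ rather than by a resultant), completes the square to $\omega ^2=\varrho a^2$ with $\varrho =b^2/4-h$ --- your $\kappa$ is indeed a quarter of the discriminant $c_2^2-4c_0$ --- and the coordinate $b=c_2$ drops out to supply the extra $\mathbb{R}$ factor. Where you genuinely diverge is in parts (1)--(3): the paper never touches root functions, but instead parametrizes each modulus-equality stratum as $Q_d=(x^2-v_j^2)W_j(x)$ (or $(x^2+A)W$ for part (3)), proves by explicit column operations that the parametrization's Jacobian has rank $d-1$ (the key identity being $U_0\pm vU_1=-2vW(\pm v)$), and then extracts transversality from the tangency relations $w_{i+2,j}=v_j^2w_{i,j}$, which force the normal vectors to be, on the nose, rows $(1,v_j^2,v_j^4,\ldots )$ of a Vandermonde matrix in the distinct $v_j^2$. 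Your dual, implicit-function version --- $E_d$ as the zero locus of $\xi (a)+\eta (a)$, with gradient components $-\bigl( \xi ^i/Q_d'(\xi )+(-\xi )^i/Q_d'(-\xi )\bigr)$ --- also works and is arguably more conceptual, but your ``Vandermonde-type computation'' needs one extra step the paper's parametric normals avoid: one must split into even- and odd-indexed components, which carry the factors $1/Q_d'(\xi _j)+1/Q_d'(-\xi _j)$ and $1/Q_d'(\xi _j)-1/Q_d'(-\xi _j)$ respectively, and observe that these cannot vanish simultaneously. Finally, your semialgebraic scruple about the whisker is a genuine refinement rather than redundancy: the paper simply identifies $\tilde{E}_4$ with the full zero set $\Phi =0$, which in fact also contains even quartics with a complex quadruple $\pm \alpha \pm i\beta$ (the handle $c_1=c_3=0$, $\kappa <0$) belonging to none of $E_4$, $F_4$, $G_4$; your verification that near the pinch point the set $E_4\cup F_4\cup G_4$ is exactly the image of the standard cross-cap map (canopy plus pinch) makes the theorem's ``Whitney umbrella'' conclusion precise in the image sense, a point the paper leaves implicit.
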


Our next aim is the description for $d\leq 4$ of the {\em stratification}
of the closure of the set
$\Pi _d$ defined by the multiplicities of the roots of $Q_d$, the possible
presence of roots equal to $0$ and the eventual equalities between positive
roots and moduli of negative roots. This is done in Sections~\ref{secd2},
\ref{secd3} and~\ref{secd4}.

The present paper is part of the study of real univariate polynomials
in relationship with Descartes' rule of signs. The question for which SPs
and pairs $(pos, neg)$ compatible with this rule there exists such a polynomial $Q_d$ is not a
trivial one. It has been asked for the first time in~\cite{AJS} and the first
non-trivial result has been obtained in~\cite{Gr}. The cases $d=5$ and~$6$ have been
considered in \cite{AlFu}. The case $d=7$ (resp. $d=8$) has been treated
in~\cite{FoKoSh} (resp. in \cite{FoKoSh} and \cite{KoCzMJ}). Other results in this direction
are obtained in \cite{CGK} and \cite{KoMB}. New facts about hyperbolic polynomials can be found in~\cite{Ko}.
A tropical analog of Descartes' rule of signs is discussed in~\cite{FoNoSh}.

\section{Proof of Theorem~\protect\ref{tmEd}\protect\label{secprtmEd}}
\begin{proof}[Proof of part (1)]
  One introduces the following local parametrization of $Q_d$:

  $$Q_d=(x^2-v^2)W(x)~~~,~~~W(x):=x^{d-2}+c_{d-3}x^{d-3}+\cdots +c_0~~~,~~~
  v,c_j\in \mathbb{R}~~~,~~~v\neq 0.$$
  The factor $W$ is a degree $d-2$
  HP and $W(\pm v)\neq 0$. Thus

  $$\begin{array}{ll}
    a_{d-1}=c_{d-3}~,&a_{d-2}=c_{d-4}-v^2~,\\ \\
    a_j=c_{j-2}-v^2c_j~,&2\leq j\leq d-3~,\\ \\ a_1=-v^2c_1~,&a_0=-v^2c_0~.
  \end{array}$$
  The mapping

  $$\iota ~:~\tilde{c}:=(c_0,c_1,\ldots ,c_{d-3},v)~\mapsto ~
  \tilde{a}:=(a_0,a_1,\ldots ,a_{d-1})$$
  is of rank $d-1$, i.~e. maximal possible. Indeed,
  the transposed of the Jacobian matrix
  $J:=(\partial \tilde{a}/\partial \tilde{c})$ equals

  $$J^t=\left( \begin{array}{cccccccc}
    -v^2&0&1&0&\cdots&0&0&0\\ \\ 0&-v^2&0&1&\cdots&0&0&0\\ \\
    0&0&-v^2&0&\cdots&0&0&0
    \\ \\
    \vdots&\vdots&\vdots&\vdots&\ddots&\vdots&\vdots&\vdots \\ \\
    0&0&0&0&\cdots&0&1&0\\ \\
    0&0&0&0&\cdots&-v^2&0&1\\ \\
    -2vc_0&-2vc_1&-2vc_2&-2vc_3&\cdots&-2vc_{d-3}&-2v&0
  \end{array}\right) ~.$$
  One can perform the following elementary operations which do not change
  the rank of $J^t$: for $j=d-2$, $d-1$, $\ldots$, $1$, add the $(j+2)$nd column multiplied
by $v^2$ to the $j$th column.
This makes disappear the terms $-v^2$ in the first $d-2$ rows. Only the units
(see the matrix $J^t$) remain as nonzero entries in these rows. The two
leftmost entries in the last row are now equal to

  $$U_0:=-2vc_0-2v^3c_2-2v^5c_4-\cdots ~~~\, \, {\rm and}~~~\, \,
  U_1:=-2vc_1-2v^3c_3-2v^5c_5-\cdots$$
  respectively (the sums are finite and their last terms depend on the
  parity of $d$). One can notice that $U_0\pm vU_1=-2vW(\pm v)$. As $v\neq 0$
  and
  $W(\pm v)\neq 0$, at least one of the terms $U_0$ and $U_1$ is nonzero. Thus
  the $d-1$ rows of $J^t$ after (and hence before) the elementary operations
  are linearly independent and
  rank$(J)=$rank$(J^t)=d-1$.
  \end{proof}

\begin{proof}[Proof of part (2)]
  If at some point of $E_d$ the polynomial $Q_d$ has $d$ distinct real roots
  and exactly $s$ equalities between positive roots and moduli of negative
  roots, then every such equality defines locally a smooth hypersurface. The
  proof of this repeats the proof of part (1) of this theorem. It remains to
  prove that the $s$ normal vectors to these hypersurfaces are linearly
  independent.

  Each of the $s$ hypersurfaces can be given a local parametrization of the
  form $Q_d=(x^2-v_j^2)W_j(x)$, $W_j=x^{d-2}+c_{d-3,j}x^{d-3}+\cdots +c_{0,j}$,
  see the proof of part~(1). We define by analogy with the proof of part (1)
  the matrices $J_j$ and $J_j^t$. Recall that each row of the matrix $J_j^t$
  is a vector tangent to the $j$th hypersurface. Hence a vector $\vec{w}_j$
  normal to the $j$th hypersurface is orthogonal to all rows of $J_j^t$.
  Set $\vec{w}_j:=(w_{1,j},\ldots ,w_{d,j})$. For $i=1$, $\ldots$, $d-2$, the
  condition that $\vec{w}_j$ is orthogonal to the $i$th row of $J_j^t$ reads
  $w_{i+2,j}=v_j^2w_{i,j}$.

  Set $\vec{w}_j^{\dagger}:=(w_{1,j},w_{3,j},w_{5,j}, \ldots ,w_{2s-1,j})$. Hence up
  to a
  nonzero constant factor the vector
  $\vec{w}_j^{\dagger}$ is of the form $(1,v_j^2,v_j^4,\ldots ,v_j^{2s-2})$. The
  vectors
  $\vec{w}_j^{\dagger}$ are the rows of a Vandermonde matrix whose determinant
  is nonzero,
  because the numbers $v_j^2$ are distinct. Hence the vectors
  $\vec{w}_j^{\dagger}$
  are linearly independent and hence such are the vectors $\vec{w}_j$ as well.
  Part (2) of the theorem is proved.
\end{proof}

\begin{proof}[Proof of part (3)] The proof of part (3) is performed by
analogy with the proofs of parts (1) and (2). One uses the parametrization
$Q_d=(x^2+A)W(x)$ with $W$ as in the proof of part~(1).
In the first $d-2$ rows of the matrix $J^t$ the terms
$-v^2$ are to be replaced by $A$ and the last row becomes $(c_0,c_1,\ldots ,c_{d-3},1,0)$.
\end{proof}

\begin{proof}[Proof of part (4).]
For $d=4$, the statement is part of Theorem~\ref{tmd4}. For $d>4$, it results
from Theorem~\ref{tmd4} and from the following lemma about the product
(see \cite{Me}, p.~12): {\em Suppose that $P$, $P_1$, $\ldots$, $P_r$
are real monic polynomials, where for $i\neq j$, $P_i$ and $P_j$
have no root in common and $P=P_1\cdots P_r$. Suppose that $U_i$
(resp. $U$) is an open neighbourhood of $P_i$ (resp. of $P$). Then the mapping}
$$U_1\times \cdots \times U_r\rightarrow U~,~~~\, (Q_1,\ldots
,Q_r)\mapsto Q_1\cdots Q_r$$
{\em is a diffeomorphism.}
\end{proof}

\section{The sets $\Pi _1$, $\Pi _2$ and $\tilde{E}_2$\protect\label{secd2}}
For $d=1$, the polynomial $Q_d$ equals $x+a$, $a\in \mathbb{R}$. One has
$\Pi _d=\mathbb{R}$ and the only root of $Q_d$ equals $-a$. The sign of this
root defines three strata: $\{ a<0\}$ and $\{ a>0\}$ (of dimension $1$, they
correspond to the SPs $(+,-)$ and $(+,+)$)and $\{ a=0\}$ ( of dimension $0$, it corresponds
to the SPAZ $(+,0)$). The set
$E_1$ is not defined.

For $d=2$, we set $Q_2:=x^2+ax+b$. In Fig.~\ref{figd2} we show the sets
$\Pi _2$, $E_2$ (in dashed line),$F_2$ (in dotted
line) and $G_2$ in the coordinates $(a,b)$,
see
Definition~\ref{defiEd}. One has

$$\begin{array}
  {ll}\Pi _2~:=~\{ ~(a,b)\in \mathbb{R}^2~|~b\leq a^2/4~\} ~,&
  E_2~:=~\{ ~(a,b)\in \mathbb{R}^2~|~a=0,b<0~\}~,\\ \\
  F_2~:=~\{ ~(a,b)\in \mathbb{R}^2~|~a=0,b>0~\}~,&
  G_2~:=~\{ (0,0)\in \mathbb{R}^2\}~.\end{array}$$

The strata of dimension $2$ and the SPs and MOs corresponding to them are:

$$\begin{array}{llllll}
  \{ a>0,0<b<a^2/4\}&(+,+,+)&,& \{ a<0,0<b<a^2/4\}&(+,-,+)&,\\
  &N<N&&&P<P&\\ \\
  \{ a>0,b<0\}&(+,+,-)&{\rm and}&\{ a<0,b<0\}&(+,-,-)&.\\
&P<N&&&N<P&
\end{array}$$

\begin{figure}[H]
\centerline{\hbox{\includegraphics[scale=0.4]{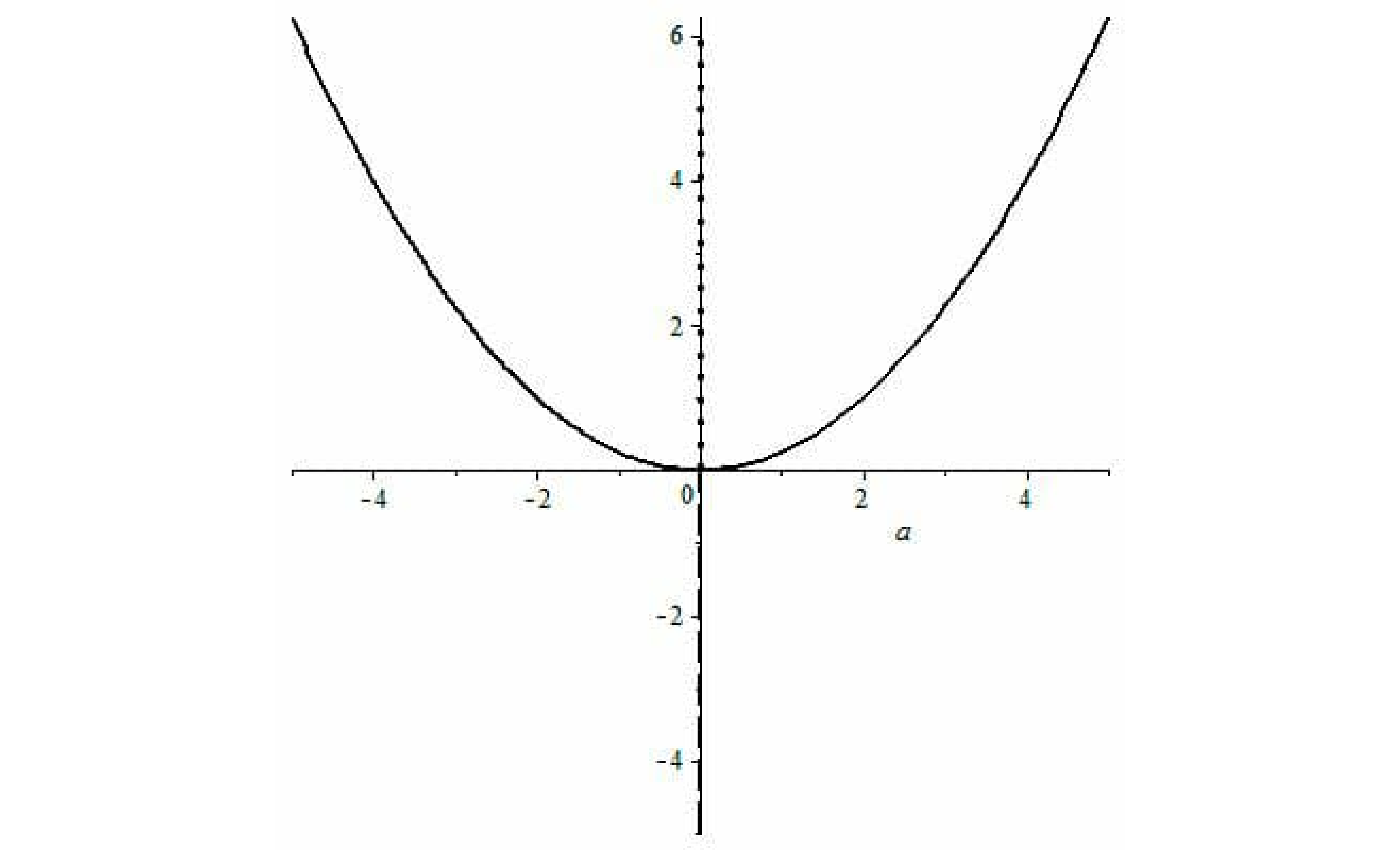}}}
\caption{The sets $\Pi _2$, $E_2$, $F_2$ and $G_2$.}
\label{figd2}
\end{figure}

The set $E_2$ coincides with one of the strata of $\Pi _2$ of dimension $1$
to which
corresponds the SPAZ $(+,0,-)$ and the MOAE $P=N$. We list the four
remaining such strata and
their corresponding SPs or SPsAZ and their MOsAE:

$$\begin{array}{llllll}
  \{ a>0,b=a^2/4\}&(+,+,+)&,& \{ a<0,b=a^2/4\}&(+,-,+)&,\\
  &N=N&&&P=P&\\ \\
  \{ a>0,b=0\}&(+,+,0)&{\rm and}&\{ a<0,b=0\}&(+,-,0)&.\\
&0<N&&&0<P&\end{array}$$
Finally, the origin is the only stratum of $\Pi _2$ of dimension $0$.
Its corresponding SPAZ is $(+,0,0)$ and the MOAE is $0=0$.

\begin{rems}
  {\rm (1) Each MO is present only in one of the strata of dimension $2$,
    i.e. to each MO corresponds exactly one SP. Geometrically this can be
    interpreted as all MOs being rigid.

    (2) To each SP corresponds exactly one MO. Hence this is the canonical MO
    and all SPs are canonical.

    (3) The set $E_2$ coinciding with one of the strata of dimension $1$ means
    that one obtains exactly the same stratification if one considers only the
    question whether roots are equal or not and whether there is a root at $0$,
    but not the question whether a positive root is equal to a modulus of a
    negative root. Indeed, SPs define MOs and vice versa.

    (4) For $b>a^2/4$, the polynomial $Q_2$ has two complex conjugate roots.}

\end{rems}

\section{The sets $\Pi _3$ and $\tilde{E}_3$\protect\label{secd3}}

We use the notation $Q_3:=x^3+ax^2+bx+c$,
and the fact that the sets $\Pi _3$,
$E_3$, $F_3$ and $G_3$ are invariant w.r.t. the
one-parameter group of quasi-homogeneous dilatations
$a\mapsto ua$, $b\mapsto u^{2}b$, and $c\mapsto u^{3}c$, $u\in \mathbb{R}^*$.
This allows to limit oneself to drawing only the pictures of $\Pi _3|_{a=1}$
and $\Pi _3|_{a=0}$. (Thus, for instance, one obtains the set $\Pi _3|_{a=-1}$
from the set
$\Pi _3|_{a=1}$ by the symmetry $a\mapsto -a$, $b\mapsto b$,
$c\mapsto -c$.)

The set $\Pi _3|_{a=1}$ is the interior of a real algebraic curve $\mathcal{C}$
with a cusp
point at $(1/3,1/27)$ (denoted by $T$ in Fig.~\ref{figd3}) and the curve itself.
For $a=0$,
this curve is a semi-cubic parabola. The set $E_3|_{a=1}$ is a half-line
with endpoint at the origin which for $a=1$ is tangent to the curve
$\mathcal{C}$ at the point $M$; the set $E_3|_{a=0}$ is the open negative
half-axis~$b$. To obtain the set $E_3|_{a=1}$ one can use the parametrization

\begin{equation}\label{eqparam3}
Q_3=(x^2-v^2)(x+1)=x^3+x^2-v^2x-v^2~,~~~\, v>0\end{equation}
hence one has $b=c<0$. We represent in Fig.~\ref{figd3} the set $E_3$ by
dashed and the set $F_3$ by dotted line.\\
\vspace{1mm}
\begin{figure}[H]
\centerline{\hbox{\includegraphics[scale=0.5]{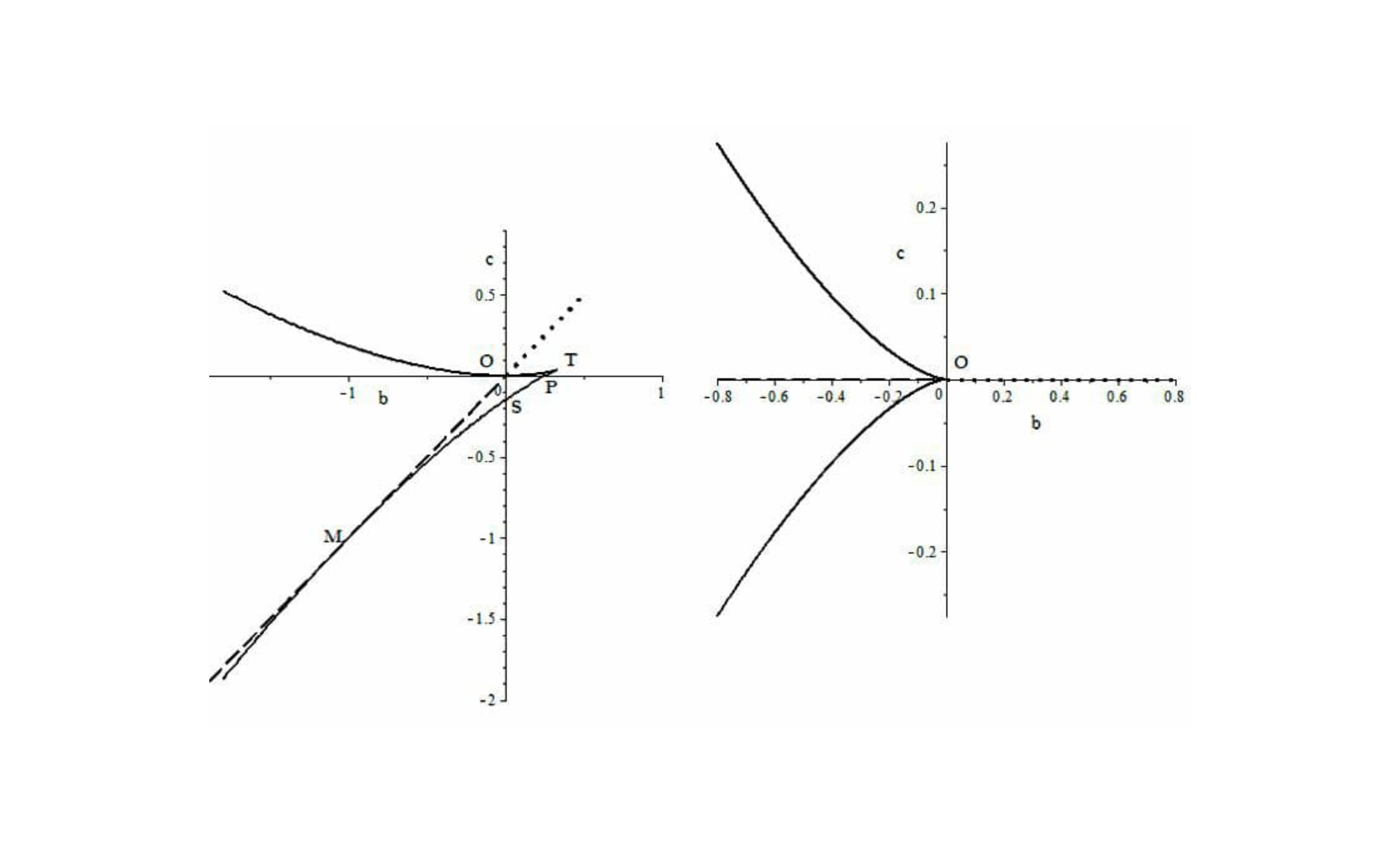}}}
\caption{The sets $\Pi _3$, $E_3$, $F_3$ and $G_3$
for $a=1$ (left) and $a=0$ (right).}
\label{figd3}
\end{figure}
There are six strata of $\Pi _3|_{a=1}$ of dimension $2$. We
list them together with the corresponding SPs and MOs and we justify the MOs
in Remarks~\ref{remsjustify} below:
\vspace{1mm}

1) the curvilinear triangle $OPT$, the SP is $(+,+,+,+)$, the MO is
$N<N<N$;
\vspace{1mm}

2) the curvilinear triangle $OPS$, the SP is $(+,+,+,-)$, the MO is
$P<N<N$;
\vspace{1mm}

3) the curvilinear triangle $OSM$, the SP is $(+,+,-,-)$, the MO is
$P<N<N$;
\vspace{1mm}

4) the open sector defined by the negative $b$-half-axis and the
half-line $OM$, the SP is $(+,+,-,-)$, the MO is $N<P<N$;
\vspace{1mm}

5) the open curvilinear sector defined by the part of the half-line
$OM$, which is below the point $M$, and the infinite arc of the curve
$\mathcal{C}$ belonging to the third quadrant, the SP is $(+,+,-,-)$,
the MO is $N<N<P$;
\vspace{1mm}

6) the open curvilinear sector defined by the negative $b$-half-axis
and the part of the curve $\mathcal{C}$ contained in the second
quadrant, the SP is $(+,+,-,+)$, the MO is $P<P<N$.

\begin{rems}\label{remsjustify}
  {\rm (1) In $OPT$ all coefficients are positive, so all roots are negative.
    When crossing the segment $OP$ a negative root becomes positive, so this
    root is of smallest modulus. By continuity the MO is $P<N<N$ in $OPS$ and
    $OSM$, because one does not cross the set $E_3$. On the segment $OM$ the
    only possible MOAE is $P=N<N$, so when passing from $OSM$ into the sector
    from 4) one passes from the MO $P<N<N$ to the MO $N<P<N$. When one crosses
    then the negative $b$-half-axis it is the root of smallest modulus that
    changes sign, so the MO becomes $P<P<N$ in 6). At the point $P$ (see 20)
    below) one has a double negative root. By continuity this is the case
    along the arc $PM$ and its continuation, so inside the domain 5) close to
    this continuation only the MOs $N<N<P$ or $P<N<N$ are possible. One can
    choose a point of the domain 5) to show by computation that the MO is
    $N<N<P$.

    (2) By analogy with the presentation (\ref{eqparam3}) one can parametrize
    the
    set $F_3|_{a=1}$ of cubic polynomials whose first two coefficients are
    equal to $1$ and which have a purely imaginary pair of roots as follows:
    $(x^2+v^2)(x+1)=x^3+x^2+v^2x+v^2$, hence $b=c>0$.
    The set $\tilde{E}_3|_{a=1}$ (see Definition~\ref{defiEd}) is the
    whole line $b=c$. For $v=0$, there is a double root at $0$ and a simple
    root at~$-1$.

    (3) Polynomials from the exterior of the curves drawn in solid line on
    Fig.~\ref{figd3} have one real and two complex conjugate roots.}
\end{rems}

Further when we speak about arcs, these are open arcs of the curve
$\mathcal{C}$.
The eleven strata of $\Pi _3|_{a=1}$ of dimension $1$ are:
\vspace{1mm}

7) the arc $OT$, the SP is $(+,+,+,+)$, the
MOAE is $N=N<N$;
\vspace{1mm}

8) the arc $TP$, the SP is $(+,+,+,+)$, the
MOAE is $N<N=N$;
\vspace{1mm}

9) the arc $PS$, the SP is $(+,+,+,-)$, the
MOAE is $P<N=N$;
\vspace{1mm}

10) the arc $SM$, the SP is $(+,+,-,-)$, the MOAE is $P<N=N$;
\vspace{1mm}

11) the infinite arc contained in the third quadrant, the SP is
$(+,+,-,-)$, the MOAE is $N=N<P$;
\vspace{1mm}

12) the infinite arc contained in the second quadrant, the SP is
$(+,+,-,+)$, the MOAE is $P=P<N$;
\vspace{1mm}

13) the open segment $OM$, the SP is $(+,+,-,-)$, the MOAE is $P=N<N$;
\vspace{1mm}

14) the half-line $OM$ without the segment $OM$ and the point $M$, the SP is
$(+,+,-,-)$, the MOAE is $N<P=N$;
\vspace{1mm}

15) the open segment $OP$, the SRAZ is $(+,+,+,0)$, the MOAE is $0<N<N$;
\vspace{1mm}

16) the open negative $b$-half-axis, the SPAZ is $(+,+,-,0)$, the MOAE is
$0<P<N$;
\vspace{1mm}

17) the open segment $OS$, the SPAZ is $(+,+,0,-)$, the MO is $P<N<N$.
\vspace{1mm}

The five strata of $\Pi _3|_{a=1}$ of dimension $0$ are the following points
listed with SPs or SPsAZ and with the HPs which they define (the coordinates
of the points are the last two coefficients of the polynomials):
\vspace{1mm}

18) $O$, the SPAZ is $(+,+,0,0)$, $x^2(x+1)=x^3+x^2$;
\vspace{1mm}

19) $T$, the SP is $(+,+,+,+)$, $(x+1/3)^3=x^3+x^2+x/3+1/27$;
\vspace{1mm}

20) $P$, the SPAZ is $(+,+,+,0)$, $x(x+1/2)^2=x^3+x^2+x/4$;
\vspace{1mm}

21) $S$, the SPAZ is $(+,+,0,-)$, $(x+2/3)^2(x-1/3)=x^3+x^2-4/27$;
\vspace{1mm}

22) $M$, the SP is $(+,+,-,-)$, $(x-1)(x+1)^2=x^3+x^2-x-1$.
\vspace{1mm}

Only some, but not all, of these strata have analogs for $a=0$:
\vspace{1mm}

5a) the curvilinear sector defined by the negative $b$-half-axis
and the infinite
branch of the semi-cubic parabola belonging to the third quadrant, the SPAZ
is $(+,0,-,-)$, the MO is $N<N<P$;
\vspace{1mm}

6a) the curvilinear sector defined by the negative $b$-half-axis and the
infinite
branch of the semi-cubic parabola belonging to the second quadrant, the SPAZ
is $(+,0,-,+)$, the MO is $P<P<N$;
\vspace{1mm}

11a) the infinite open arc of the semi-cubic parabola
contained in the third quadrant, the SPAZ is
$(+,0,-,-)$, the MOAE is $N=N<P$;
\vspace{1mm}

12a) the infinite open arc of the semi-cubic parabola
contained in the second quadrant, the SPAZ is
$(+,0,-,+)$, the MOAE is $P=P<N$;
\vspace{1mm}

14a and 16a) the open negative $b$-half-axis, the SPAZ is $(+,0,-,0)$,
the MOAE is $0<P=N$;
\vspace{1mm}

18a) $O$, the SPAZ is $(+,0,0,0)$, the MOAE is $0=0=0$.

\begin{rems}
  {\rm (1) For $d=3$, one can parametrize the closure of the set $E_3$
    as follows: one sets

    $$Q_3:=(x^2-v^2)(x+A)=x^3+Ax^2-v^2x-Av^2~,~~~\, v\geq 0~,~~~\,
    A\in \mathbb{R}~.$$
    Thus setting $B:=-v^2$ one can view the closure of the set $E_3$ as the
    graph of the function $AB$
    defined for $(A,B)\in \mathbb{R}\times \mathbb{R}_-$. Hence the
    coefficient $c$ is expessed as a function in $(a,b)$: $c=ba$.
    The latter equation is satisfied by polynomials $Q_3$ parametrized
    as follows:

    $$Q_3:=(x^2+B)(x+A)=x^3+Ax^2+Bx+AB~.$$
    For $B<0$, $B>0$ and $B=0$, one obtains the sets $E_3$, $F_3$ and $G_3$
    respectively, see Definition~\ref{defiEd}.

    (2) The SPs $(+,+,+,+)$, $(+,+,-,+)$ and $(+,+,+,-)$ are canonical
    (but the SP $(+,+,-,-)$ is not). Geometrically this is expressed by the
    fact that to each of them corresponds a single dimension $2$ stratum of
    $\Pi _3|_{a=1}$ (and three such strata to $(+,+,-,-)$). In the same way,
    when considering the set $\Pi _3|_{a=-1}$, one sees that the SPs
    $(+,-,+,-)$, $(+,-,-,-)$ and $(+,-,+,+)$ are canonical and $(+,-,-,+)$
    is not.

    (3) The MOs $N<N<N$, $N<P<N$, $P<N<P$ and $P<P<P$ are rigid, see
    Remarks~\ref{remscanrig} -- each of them is present in only one stratum of
    $\Pi _3$ of maximal dimension. The MO $P<N<N$ is present in the strata
    $OPS$ and $OSM$, i.e. 2) and 3), so it is not rigid. In the same way the
    MO $N<P<P$ is not rigid. The MO $P<P<N$ is present in the strata 6) and
    6a) hence it is present in a maximal-dimension stratum of $\Pi _3|_{a=-1}$
    as well, so it is not rigid. In the same way the MO $N<N<P$ is not rigid.
    Thus there are eight MOs of which four are present each in one and the
    remaining four are present each in two strata of $\Pi _3$ of maximal
    dimension.}
\end{rems}

\section{The sets $\Pi _4$ and $\tilde{E}_4$\protect\label{secd4}}
\subsection{General properties of the sets $\Pi _4$ and
$\tilde{E}_4$\protect\label{subsecgen}}
\ \\
We consider the family of polynomials $Q_4:=x^4+ax^3+bx^2+cx+h$, $a$,
$b$, $c$, $h\in \mathbb{R}$.
\begin{rem}
{\rm For $d=4$, the set $\tilde{E}_4$ intersects also the domain of
$\mathbb{R}^4\cong Oabch$ in which the polynomial $Q_4$ has two real roots
and one complex conjugate pair, and the domain in which it has two complex
conjugate pairs.}
\end{rem}
\begin{tm}\label{tmd4}
(1) The hypersurface $\tilde{E}_4$ is defined by the condition $\Phi
(a,b,c,h)=0$, where $\Phi :=a^2h+(c-ab)c$. This hypersurface is irreducible.

(2) The set of singular points of $\tilde{E}_4$ is the plane $a=c=0$.
This is the set of
even polynomials.

(3) For $a=c=0$, the Hessian matrix of $\Phi$ is of rank $2$ for
$4h-b^2\neq 0$ and of rank
$1$ for $4h-b^2=0$ (that is, when the polynomial $Q_4$ has either two
double opposite real roots
or a double complex conjugate purely imaginary pair or a quadruple root
at~$0$).

(4) At a point of the set $\{ a=c=0,~4h-b^2\neq 0\}$ the hypersurface
$\Phi =0$ is locally the transversal intersection of two smooth hypersurfaces.

(5) At a point of the set $\{ a=c=0,~4h-b^2=0\}$ the hypersurface $\Phi
=0$ is locally diffeomorphic to the direct product of $\mathbb{R}$ with a hypersurface
in $\mathbb{R}^3$ having a Whitney umbrella singularity.

\end{tm}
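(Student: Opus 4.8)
The plan is to produce an explicit defining equation for $\tilde E_4$, prove its irreducibility, and then reduce the entire local analysis of parts (2)--(5) to a single polynomial normal form for $\Phi$.

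\emph{Part (1).} A point lies in $\tilde E_4 = E_4 \cup F_4 \cup G_4$ exactly when $Q_4$ is divisible by a real quadratic $x^2+A$: the value $A=-\rho^2<0$ produces the opposite real roots $\pm\rho$ (this is $E_4$), $A=\beta^2>0$ produces the imaginary pair $\pm i\beta$ ($F_4$), and $A=0$ produces the double root at the origin ($G_4$). To turn this into an equation I would divide $Q_4$ by $x^2+A$; the remainder is $(c-aA)x+(A^2-bA+h)$, so divisibility means $c=aA$ together with $A^2-bA+h=0$. Eliminating $A$ (for $a\neq 0$ substitute $A=c/a$; in general take the resultant in $A$ of $aA-c$ and $A^2-bA+h$) yields precisely $\Phi=a^2h+(c-ab)c=a^2h+c^2-abc$. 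Irreducibility is then immediate: $\Phi$ has degree one in $h$ with leading coefficient $a^2$ and constant term $c(c-ab)$, and since $\gcd(a^2,\,c(c-ab))=1$ no nonconstant factor can split off. I should flag the one delicate point here: set-theoretically $\{\Phi=0\}$ is slightly larger than $\tilde E_4$. On the plane $a=c=0$ one has $\Phi\equiv 0$, yet an even quartic $x^4+bx^2+h$ with $4h-b^2>0$ has two non-real, non-imaginary conjugate pairs and hence no real factor $x^2+A$, so it sits on $\{\Phi=0\}$ but not in $\tilde E_4$. The honest statement is that for $a\neq0$ one has $\{\Phi=0\}=\tilde E_4$, so $\{\Phi=0\}$ is the irreducible Zariski closure of $\tilde E_4$ and agrees with it off this exceptional region.

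\emph{The key reduction.} The crucial observation is that the polynomial change of coordinates $W:=c-ab/2$, $\zeta:=h-b^2/4$ (keeping $a$ and $b$) is a polynomial automorphism of $\mathbb{R}^4$ and transforms $\Phi$ into the normal form $\Phi=W^2+a^2\zeta$; a one-line substitution verifies the identity. The decisive feature is that $b$ disappears entirely, so in these coordinates $\{\Phi=0\}=\mathbb{R}_b\times S$, where $S:=\{(a,W,\zeta)\in\mathbb{R}^3 : W^2+a^2\zeta=0\}$. All of parts (2)--(5) then reduce to reading off the local geometry of $S$.

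\emph{Parts (2)--(5).} From $\nabla\Phi=(2a\zeta,\,0,\,2W,\,a^2)$ the critical set is $\{a=W=0\}$, i.e. $\{a=c=0\}$, and since $\Phi$ is irreducible this is the singular locus of $\{\Phi=0\}$, giving part (2). The rank of the Hessian at a critical point is a diffeomorphism invariant, so I would compute it in the new coordinates: along $\{a=W=0\}$ the only nonvanishing second derivatives are $\Phi_{WW}=2$ and $\Phi_{aa}=2\zeta$, so the rank is $2$ when $\zeta\neq0$ and $1$ when $\zeta=0$; since $\zeta=(4h-b^2)/4$ this is exactly part (3). For part (4), at a point with $\zeta_0\neq0$: if $\zeta_0<0$ then $S=\{W=a\sqrt{-\zeta}\}\cup\{W=-a\sqrt{-\zeta}\}$ is two smooth sheets crossing transversally along $\{a=W=0\}$ (their conormals differ in the $a$-component, hence are independent), so $\{\Phi=0\}$ is locally the transversal union of two smooth hypersurfaces; if $\zeta_0>0$ the two sheets are complex conjugate and the real locus collapses to the plane. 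For part (5), at $\zeta_0=0$ the surface $S$ is $W^2=a^2(-\zeta)$, which is the Whitney umbrella $x^2=y^2z$ under $(x,y,z)=(W,a,-\zeta)$, and $\{\Phi=0\}=\mathbb{R}_b\times S$ delivers the asserted product of $\mathbb{R}$ with a Whitney umbrella.

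\emph{Main obstacle.} Once the normal form $\Phi=W^2+a^2\zeta$ is secured, parts (2)--(5) are essentially immediate, so the genuine difficulty is not computational. It lies rather in the careful bookkeeping of part (1) — pinning down the exact relation between the root-theoretic set $\tilde E_4$ and the algebraic hypersurface $\{\Phi=0\}$ on the plane $a=c=0$ — and in making explicit, in part (4), whether ``transversal intersection of two smooth hypersurfaces'' is to be read over $\mathbb{R}$ (valid precisely when $4h-b^2<0$) or complex-analytically (valid throughout $4h-b^2\neq0$). I would state this convention at the outset and treat the two signs of $4h-b^2$ separately.
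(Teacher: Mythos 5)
Your proposal is correct, and it diverges from the paper's own proof in one genuinely substantive way. For part (1) the two arguments are close: the paper only verifies $\Phi=0$ on the parametrized family $(x^2+A)(x^2+ux+v)$ and proves irreducibility by the same linear-in-$h$ reasoning you use (your gcd formulation is just a cleaner version of the paper's ``$\Phi$ is not divisible by $a^2$''); your division-with-remainder derivation additionally gives the converse inclusion for $a\neq 0$, and your observation that $\{\Phi =0\}$ strictly contains $\tilde{E}_4$ on $\{a=c=0,\ 4h-b^2>0\}$ is a correct refinement that the paper passes over in silence. The organizational difference is that the paper introduces the coordinates $\omega =c-ab/2$, $\varrho =-h+b^2/4$ only in the proof of part (5), whereas you promote the normal form $\Phi =W^2+a^2\zeta$ to the engine of parts (2)--(5). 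For (2) and (3) this buys little: the paper computes $\nabla \Phi$ and the $4\times 4$ Hessian directly in the original coordinates, equally quickly, and your appeal to invariance of the Hessian rank at a critical point under diffeomorphism is legitimate. Part (5) is then identical to the paper's argument up to the sign $\varrho =-\zeta$.

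Where your route genuinely differs, and is in fact more complete, is part (4). The paper deduces it from part (3) of Theorem~\ref{tmEd}, asserting that an even quartic has two opposite pairs of roots ``each of which might be real or purely imaginary''. That assertion covers only the regime $b^2-4h>0$ (your $\zeta_0<0$): in the regime $4h-b^2>0$ the two conjugate pairs are neither real nor purely imaginary, so Theorem~\ref{tmEd}(3) does not apply there, and it also fails to apply at points with $h=0$, where $Q_4$ has a double root at $0$ and the distinct-roots hypothesis is violated. Your direct case analysis on $W^2=a^2(-\zeta )$ — two smooth sheets $W=\pm a\sqrt{-\zeta}$ with independent normals when $\zeta_0<0$, versus the real locus collapsing to the plane $\{a=W=0\}$ when $\zeta_0>0$ — handles all these points uniformly and makes explicit a convention the paper leaves ambiguous: in Theorem~\ref{tmEd} ``transversal intersection of $s$ smooth hypersurfaces'' is glossed as $s$ hypersurfaces with linearly independent normals, i.e. a union in general position, and that reading of part (4) of Theorem~\ref{tmd4} is valid over $\mathbb{R}$ precisely when $4h-b^2<0$, while for $4h-b^2>0$ it holds only in the literal codimension-two sense (or complex-analytically). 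So your proof is not merely an alternative; it quietly repairs a small gap in the paper's own treatment of part (4).
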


\begin{proof}
Part (1). The hypersurface $\tilde{E}_4$ can be defined with the help of
the
parametrization

$$Q_4:=(x^2+A)(x^2+ux+v)=x^4+ux^3+(A+v)x^2+Aux+Av~.$$
Hence $a^2h=Au^2v$, $c-ab=-uv$ and $(c-ab)c=-Au^2v$. The polynomial
$\Phi$ is irreducible.
Indeed, it is linear in $h$. Should it be reducible, it should be of the
form
$(a^2+\cdots )(h+\cdots )$. However $a^2h$ is its only term containing
$h$, so this
factorization should be of the form $a^2(h+\cdots )$. This is
impossible, because
$\Phi$ is not divisible by~$a^2$.

Part (2). We set $\Phi _a:=\partial \Phi /\partial a$ and similarly for
$\Phi _b$, $\Phi _c$ and
$\Phi _h$. Hence

$$\Phi _a=2ah-bc,~~~\, \Phi _b=-ac,~~~\, \Phi _c=2c-ab~~~\, {\rm
and}~~~\, \Phi _h=a^2~.$$
The singular points of $\tilde{E}_4$ are defined by the condition $\Phi
=\Phi _a=\Phi _b=\Phi _c=\Phi _h=0$ which is equivalent to $a=c=0$. This
is the set of even polynomials.

Part (3). The Hessian matrix of $\Phi$ equals

$$\left( \begin{array}{rrrr}
2h&-c&-b&2a\\ \\ -c&0&-a&0\\ \\ -b&-a&2&0\\ \\
2a&0&0&0\end{array}\right) ~.$$
For $a=c=0$, only its first and third rows contain non-zero entries and
the third row
contains the entry $2$, so the rank equals $1$ or~$2$. It equals $1$
exactly when
$4h-b^2=0$ (to be checked directly). In this case $Q_4=(x^2+b/2)^2$ from
which part (3) follows.

Part (4). Any even degree $4$ polynomial has two pairs of opposite roots
each of which might be real or purely imaginary. Therefore part (4)
results from part (3) of Theorem~\ref{tmEd}.

Part (5). The equation $\Phi =0$ can be given the equivalent form
$$(c-ab/2)^2+(h-b^2/4)a^2=0~.$$
The change of coordinates $(a,b,c,h)\mapsto (a,b,\omega :=c-ab/2,\varrho
:=-h+b^2/4)$ is a global diffeomorphism of $\mathbb{R}^4$ onto itself.
In the new coordinates the above equation
becomes $\omega ^2=\varrho a^2$ which is the equation of the Whitney umbrella.

\end{proof}
We remind that the one-parameter group of diffeomorphisms
$$x\mapsto tx,~~~\, a\mapsto ta,~~~\, b\mapsto t^2b,~~~\, c\mapsto t^3c,~~~\,
h\mapsto t^4h,~~~\,t\neq 0,$$
preserves the set $\tilde{E}_4$ and the set of zeros of the polynomial
$Q_4$. Therefore
for $a\neq 0$, the set
$\tilde{E}_4|_{a=1}$ gives an adequate idea about the set $\tilde{E}_4$.
The set $\tilde{E}_4|_{a=0}$ is discussed in Subsection~\ref{subseca0}.

We represent the sets $\tilde{E}|_{a=1}$ and $\Pi _4|_{a=1}$ by means of some
pictures. One can notice that each set $\tilde{E}_4|_{a=1,b=b_0}$ is a parabola
of the form $h=-c^2+b_0c$, see part (1) of Theorem~\ref{tmd4}. The intersections of
the set $\Pi _4|_{a=1}$ with the planes $\{b=$const$\}$ are either empty or curvilinear
triangles or (for $b=3/8$) a point. When this is a curvilinear triangle, its border has a
 transversal self-intersection point (see the point $I$ in Fig.~\ref{figd410}) and two cusp
  points (see the points $L$ and $R$ in Fig.~\ref{figd410}).
At the point $L$ the polynomial $Q_4$ has a triple real root to the left
and a simple root to the right; and vice versa at the point $R$. Along the arc $LI$ (resp. $IR$ or $RL$)
there are two simple and one double real roots; the double root is to the
right (resp. to the left or in the middle). At the point $I$ the polynomial $Q_4$ has two double real roots.

In the curvilinear sector above the point $I$ the polynomials have two
pairs of complex conjugate roots and in the domain to the left, below and to
the right of the curvilinear triangle $LRI$ they have two real and two complex
conjugate roots.

\subsection{The sets $\Pi _4|_{a=1}$ and
$\tilde{E}_4|_{a=1}$\protect\label{subseca1}}
In Figures~\ref{figd41} and \ref{figd42} we show the projections on the plane $(b,c)$ of some strata of
the set $\Pi _4|_{a=1}$ and the intersection $\Pi _4|_{a=1}\cap \{ h=0\}$. The curve drawn in solid line
has a cusp at $(3/8, 1/16)$; we denote it by $\mathcal{S}$. The cusp point corresponds to a polynomial with
 a quadruple root:
$$(x+1/4)^4=x^4+x^3+(3/8)x^2+x/16+1/256~.$$
The upper (resp. lower) branch of this curve  is the stratum of $\Pi _4|_{a=1}$ consisting of
polynomials of the form $(x-\xi )^3(x-\eta )$, where
$3\xi +\eta =-1$ and $\xi >\eta$ (resp. $\xi <\eta$).
The half-line in dashed line (we denote it by $\mathcal{L}_R$)
represents the projection on the plane $(b, c)$ of the stratum
consisting of two double real roots:
$$(x-\zeta )^2(x-\theta )^2=x^4-2(\zeta +\theta )x^3+(\zeta ^2+\theta ^2+4\zeta \theta )x^2-2\zeta \theta
(\zeta +\theta )x+\zeta ^2\theta ^2~,$$
hence $\zeta +\theta =-1/2$, so $b=1/4+2\zeta \theta$, $c=\zeta \theta$ and $c=b/2-1/8$. The continuation of this
 half-line is drawn in dotted line; we denote it by $\mathcal{L}_I$. It corresponds to the point $B$
 in Fig.~\ref{figd421} and represents polynomials having a double complex conjugate pair.
 In the space $(b,c,h)$ the union of the two strata projecting
  on $\mathcal{L}_R\cup \mathcal{L}_I$ on the plane $(b,c)$ is a parabola. Indeed, one has $h=c^2$.

The curve represented in dash-dotted line (we denote it by
$\mathcal{H}$) is the intersection $\Pi _4|_{a=1}\cap \{ h=0\}$. It has a cusp at $(1/3,1/27)$
(this point corresponds to the polynomial $(x+1/3)^3x$) which is situated on the lower branch of
the curve~$\mathcal{S}$.

It is tangent to the latter curve at the origin (this is the polynomial $(x+1)x^3$) and its
lower branch is tangent at the point $(1/4,0)$ to the half-line
drawn in dashed line (this is the polynomial $(x+1/2)^2x^2$).
\begin{figure}[H]
\centerline{\hbox{\includegraphics[scale=0.4]{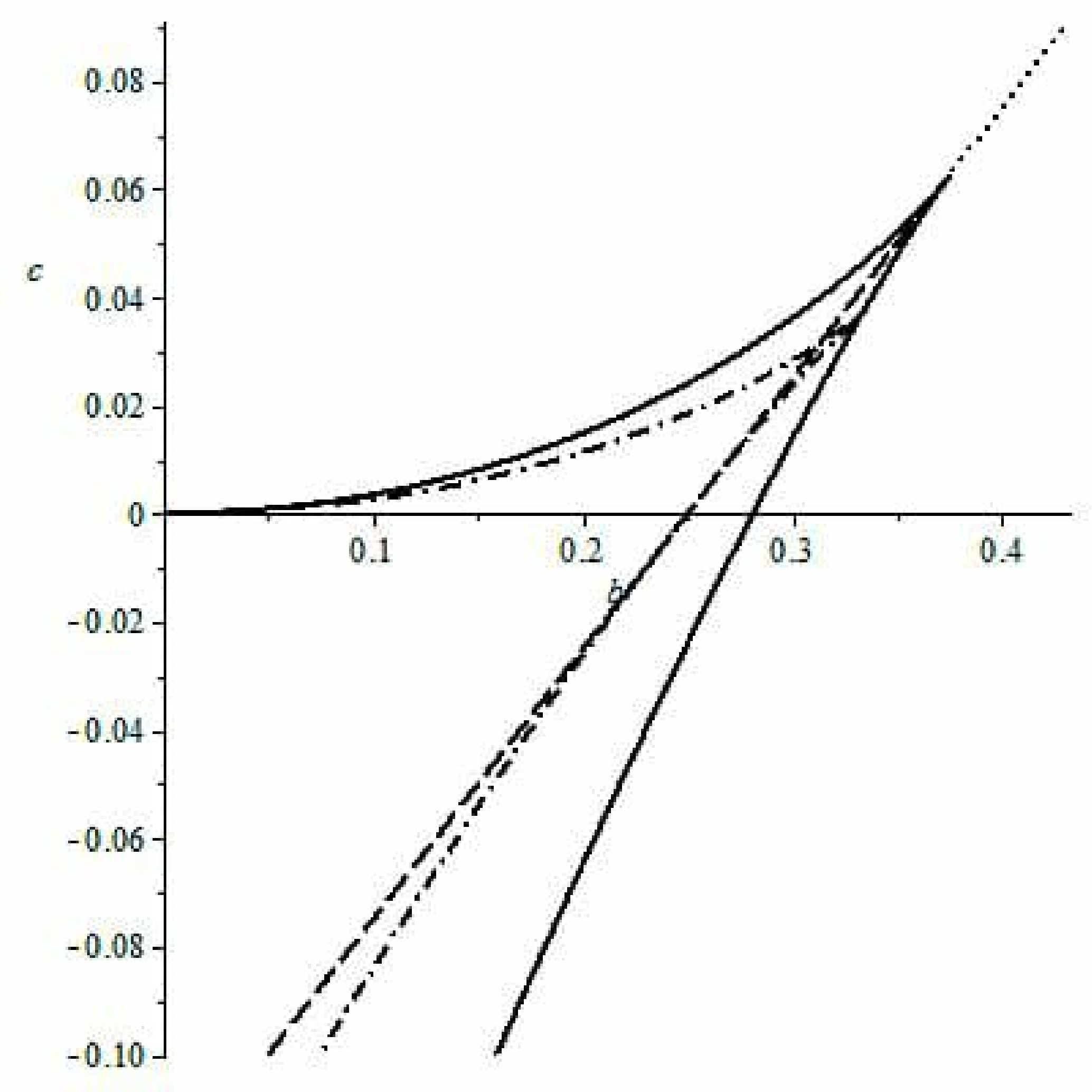}}}
\caption{The projection of $\Pi _4|_{a=1}$ on the plane of parameters $(b,c)$ .}
\label{figd41}
\end{figure}
\begin{figure}[H]
\centerline{\hbox{\includegraphics[scale=0.4]{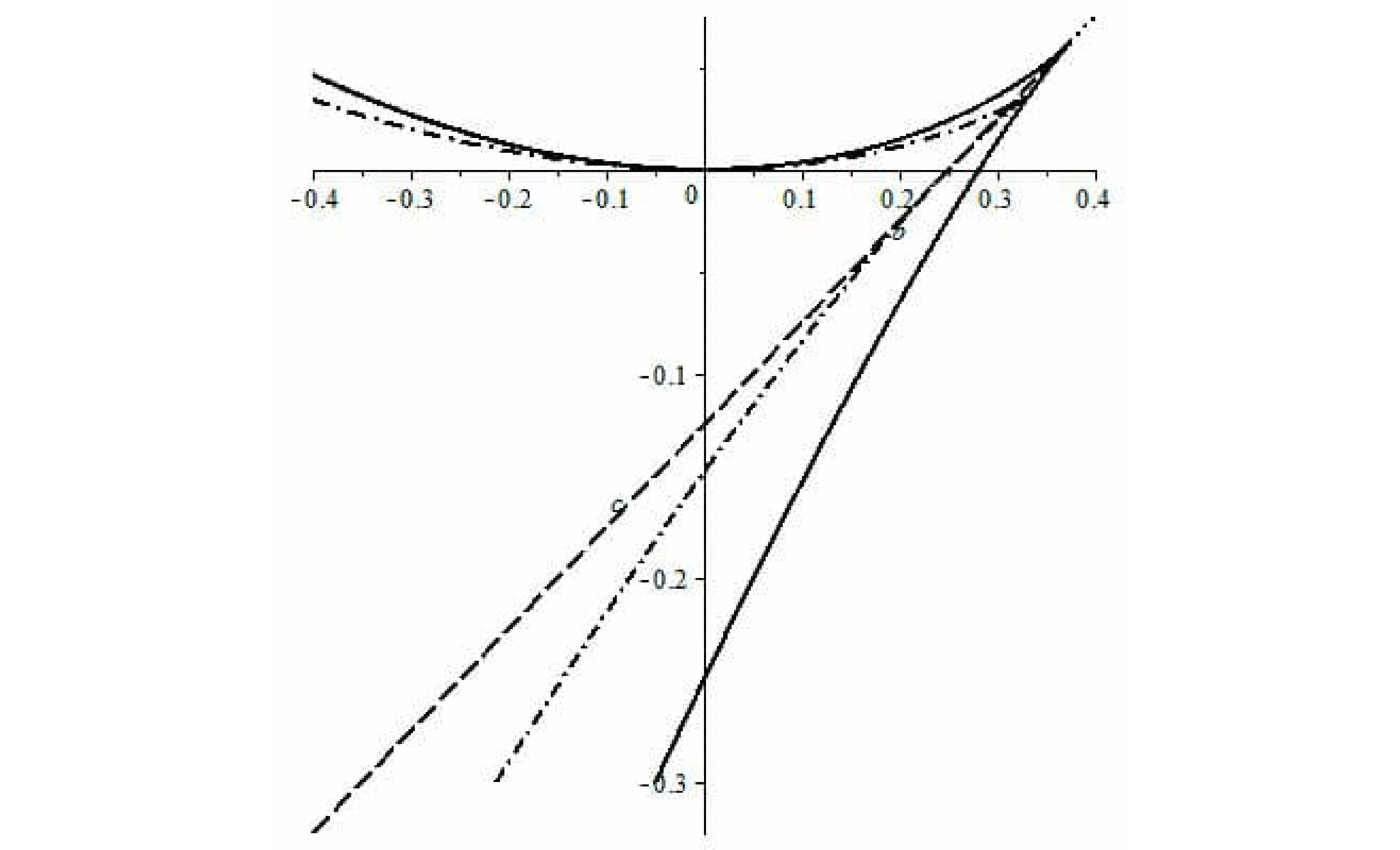}}}
\caption{The projection of $\Pi _4|_{a=1}$ on the plane of parameters $(b,c)$ (global view).}
\label{figd42}
\end{figure}
For $a=1$, we show in Fig.~\ref{figd43} (in long-dashed line) the projection
on the plane $(b,c)$ of the intersection $\Pi _4\cap\tilde{E}_4$. This is the
parabola
$\mathcal{P}:(b-2c)^2+c=0$. Indeed, one can parametrize a polynomial
from the
set $\tilde{E}_4|_{a=1}$ as follows:
\begin{equation}\label{eqQ4uw}
Q_4:=(x^2-u^2)(x^2+x+w)=x^4+x^3+(w-u^2)x^2-u^2x-wu^2~,~~~\, u,~w\in
\mathbb{R}~.
\end{equation}
The condition Res$(Q_4,Q_4',x)=0$ reads:
$$-4u^2(u^2+u+w)^2(u^2-u+w)^2(4w-1)=0~.$$
For $w=\pm u-u^2$, one obtains $b=-u(2u\mp 1)$ and $c=-u^2$ from which
results
the equation of $\mathcal{P}$.
For $u=0$, the polynomial $Q_4=x^4+x^3+wx^2$ has a double root at~$0$ hence
the corresponding half-line $c=h=0$, $b\leq 1/4$, belongs to the closure
of the set $\tilde{E}_4|_{a=1}$.
Its projection on the plane $(b,c)$ is drawn in long-dashed line on
Fig.~\ref{figd43}. The projection of its analytic continuation is drawn in dotted line.
For $w=1/4$, the polynomial $Q_4$ has a double root at~$-1/2$. The
corresponding half-line $~c=b-1/4=4h$, $c\leq 0$, belongs to the closure of
$\tilde{E}_4|_{a=1}$. Its projection on the plane $(b,~c)$ is drawn in long-dashed
line and the projection of its analytic continuation in dotted line.

The symmetry axis of the parabola $\mathcal{P}$ is represented in dotted
line, its equation is $c=b/2-1/10$.
It is parallel to the projection $\mathcal{L}_R$ on the plane $(b,~c)$ of
the stratum of $\Pi _4|_{a=1}$ of polynomials having two double roots.
At the intersection point of the projection of this stratum with the
parabola $\mathcal{P}$ the tangent line to the parabola is parallel to the $c$-axis.
\begin{figure}[H]
\centerline{\hbox{\includegraphics[scale=0.4]{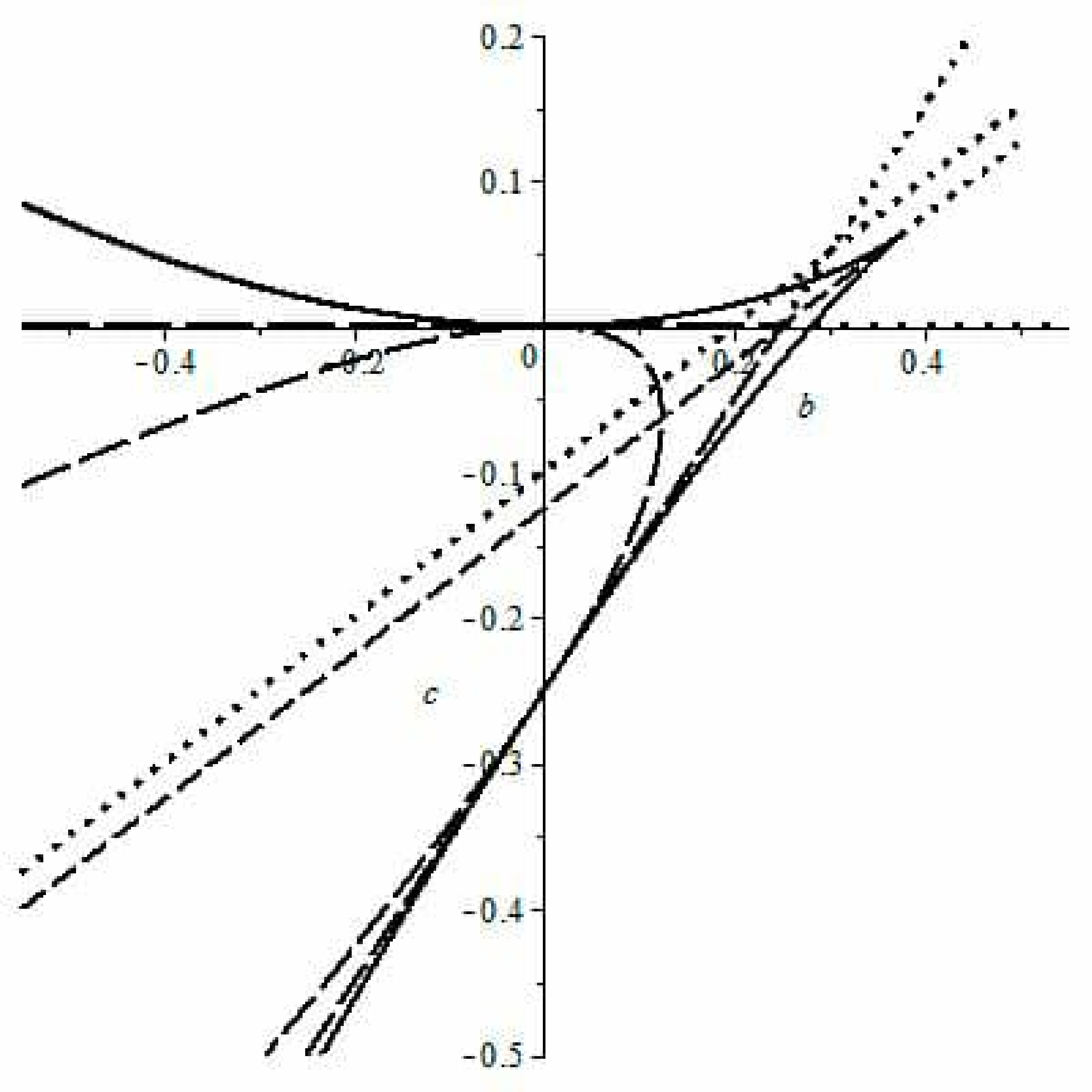}}}
\caption{The projection of the intersection of $\tilde{E}_4$ and $\Pi_4$.}
\label{figd43}
\end{figure}
In Fig.~\ref{figd44} we show the intersection of the set $\tilde{E}_4$ with the plane
$(b,c)$ and the curve $\mathcal{H}$.
The intersection $\tilde{E}_4|_{a=1}\cap \{ h=0\}$ consists of the two
half-lines $b=c$, $c\leq 0$, and $c=0$, $b\leq 1/4$ (drawn in dashed line).
Their continuations are drawn in dotted line. One can observe that
 (see (\ref{eqQ4uw})) the set $\tilde{E}_4\cap \{ a=1\}$ is
defined by the equation $h=c(b-c)$.
\begin{figure}[H]
\centerline{\hbox{\includegraphics[scale=0.4]{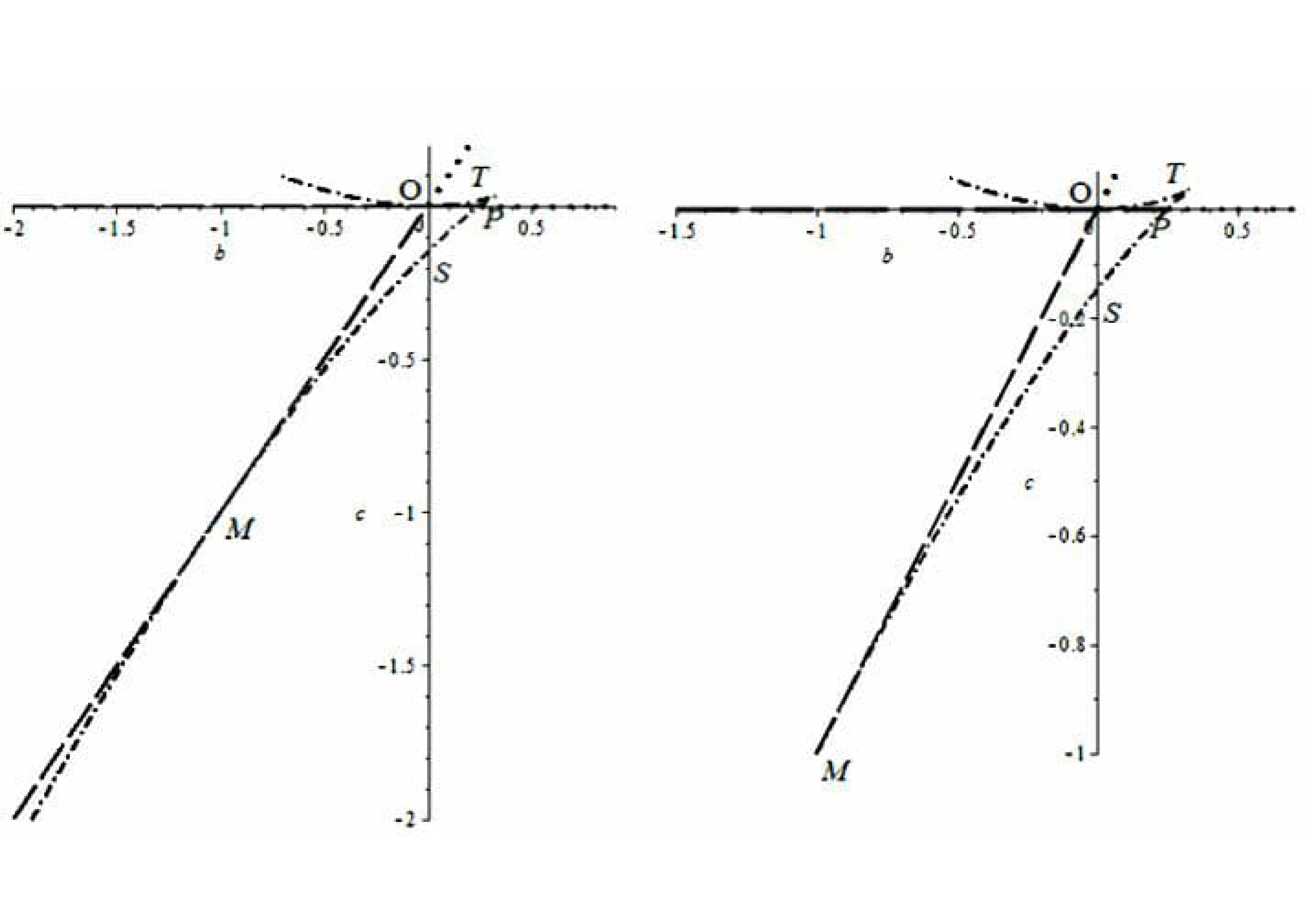}}}
\caption{The intersections of the sets $\tilde{E}_4$ and $\Pi _4$ with the plane $(b,c)$.}
\label{figd44}
\end{figure}
\begin{rems}

{\rm (1) For $d=4$ and $a>0$, there are five canonical SPs. We list them
together with the corresponding MOs:}
$$\begin{array}{lll}

(+,+,+,+,+)&&N<N<N<N~\\ \\
(+,+,+,+,-)&&P<N<N<N~\\ \\
(+,+,-,+,+)&&N<P<P<N~\\ \\
(+,+,+,-,+)&&P<P<N<N~\\ \\
(+,+,-,+,-)&&P<P<P<N~.
\end{array}$$

{\rm The first of them is trivially canonical. For the other ones being
canonical follows from the results in~\cite{KoPuMaDe}; for the last one it is to
 be observed that if the polynomial $Q_4(x)$ realizes the SP $(+,+,+,+,-)$,
  then $-x^4Q_4(-1/x)$ realizes the SP $(+,+,-,+,-)$. The remaining SPs beginning
  with $(+,+)$, namely, $(+,+,+,-,-)$, $(+,+,-,-,-)$ and $(+,+,-,-,+)$,
  are not canonical, see~\cite{KoPuMaDe}. Geometrically this is illustrated by the
   fact that the set $E_4$ divides the intersections of $\Pi _4$ with the
   corresponding orthants in two or more parts, see Fig.~\ref{figd45}-\ref{figd419}.

(2) For $d=4$, the only rigid MOs for which one has $a>0$ are $P<N<P<N$
and $N<N<N<N$. The corresponding SPs are $(+,+,-,-,+)$ and $(+,+,+,+,+)$.
The MO $P<N<P<N$ concerns the part of $\Pi _4|_{a=1,b=-0.5}$
(see Fig.~\ref{figd48}) between the negative $c$-half-axis and the set $E_4|_{a=1,b=-0.5}$.}
\end{rems}

\begin{rem}
{\rm We indicate how the roots of the polynomial $Q_4$ change when it runs along the arc
$OUVW$, see Fig.~\ref{figd45}. Greek letters indicate positive quantities:}

$$\begin{array}{llll}
{\rm At~}O~:&x^2(x-\xi )(x+\eta )~,&-\xi +\eta =1~,&\xi <\eta ~.\\ \\
{\rm Along~}OU~:&(x-\alpha )(x+\alpha )(x-\xi )(x+\eta )~,&-\xi +\eta =1~,&\alpha <\xi <\eta ~.\\ \\
{\rm At~}U~:&(x-\xi )^2(x+\xi )(x+\eta )~,&-\xi +\eta =1~,&\xi <\eta ~.\\ \\
{\rm Along~}UV~:&(x-\alpha )(x+\alpha )(x-\xi )(x+\eta )~,&-\xi +\eta =1~,&\xi <\alpha <\eta ~.\\ \\
{\rm At~}V~:&(x-\alpha )(x+\alpha )^2(x-\xi )~,&-\xi +\alpha =1~,&\xi <\alpha ~.\\ \\
{\rm Along~}VW~:&(x-\alpha )(x+\alpha )(x\mp \xi )(x+\eta )~,&\mp \xi+\eta =1~,&\xi <\eta <\alpha ~.\\ \\
{\rm At~}W~:&(x-\alpha )(x+\alpha )(x+\xi )^2~,&2\xi =1~,&\xi <\alpha ~.\end{array}$$
{\rm When a point of the set $\tilde{E}_4|_{a=1}$ is outside the set $\Pi_4|_{a=1}$ and close to the point $O$, the polynomial $Q_4$
has a pair of conjugate purely imaginary roots close to $0$. The sign $\mp$ of $\xi$ above is explained by the fact that
when crossing the $c$-axis the corresponding root changes sign.}
\end{rem}
\begin{figure}[H]
\centerline{\hbox{\includegraphics[scale=0.4]{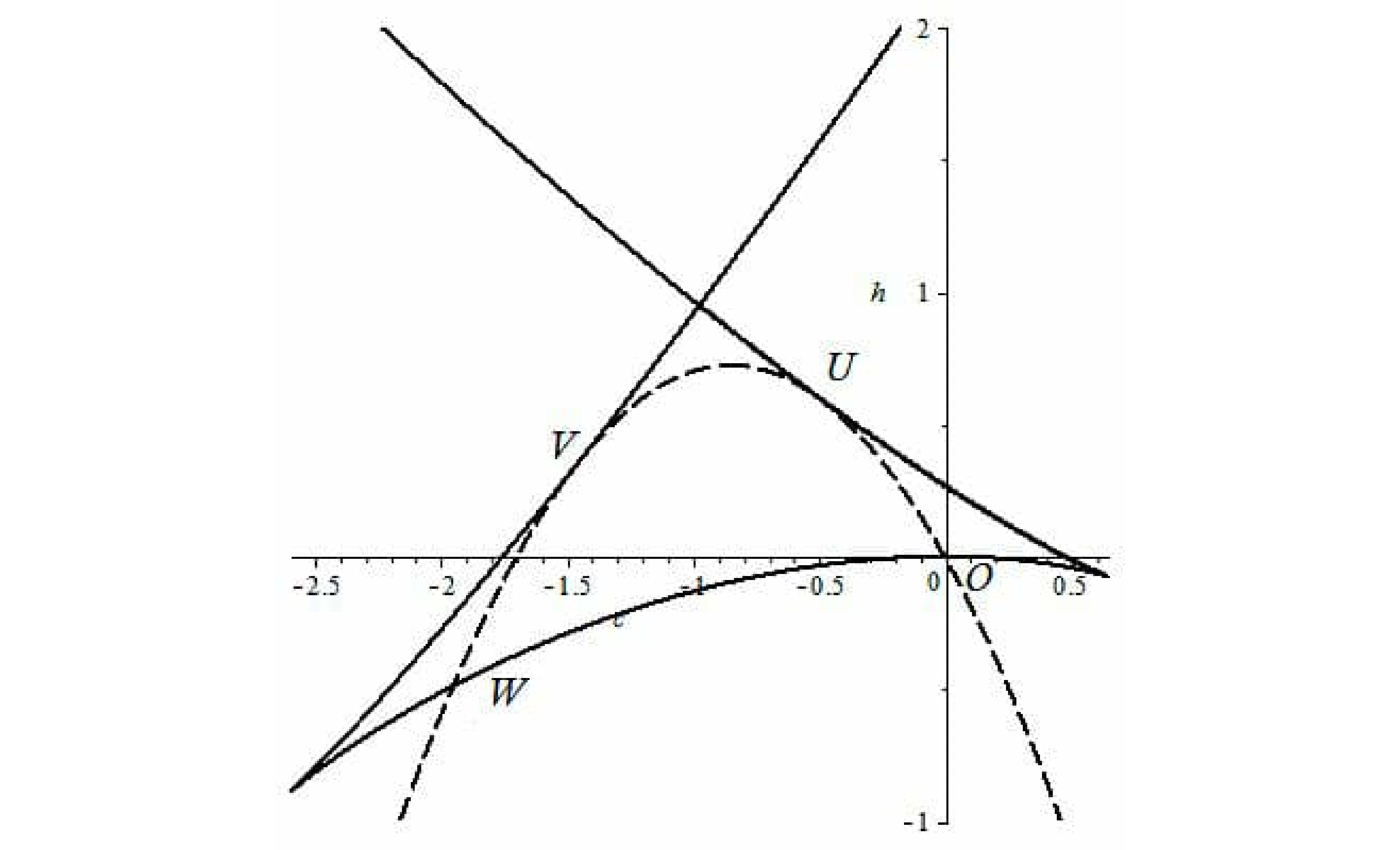}}}
\caption{The sets $\tilde{E}_4|_{a=1}$ and $\Pi _4|_{a=1}$ for ($b=-1.7$) (global view).}
\label{figd45}
\end{figure}
\begin{figure}[H]
\centerline{\hbox{\includegraphics[scale=0.4]{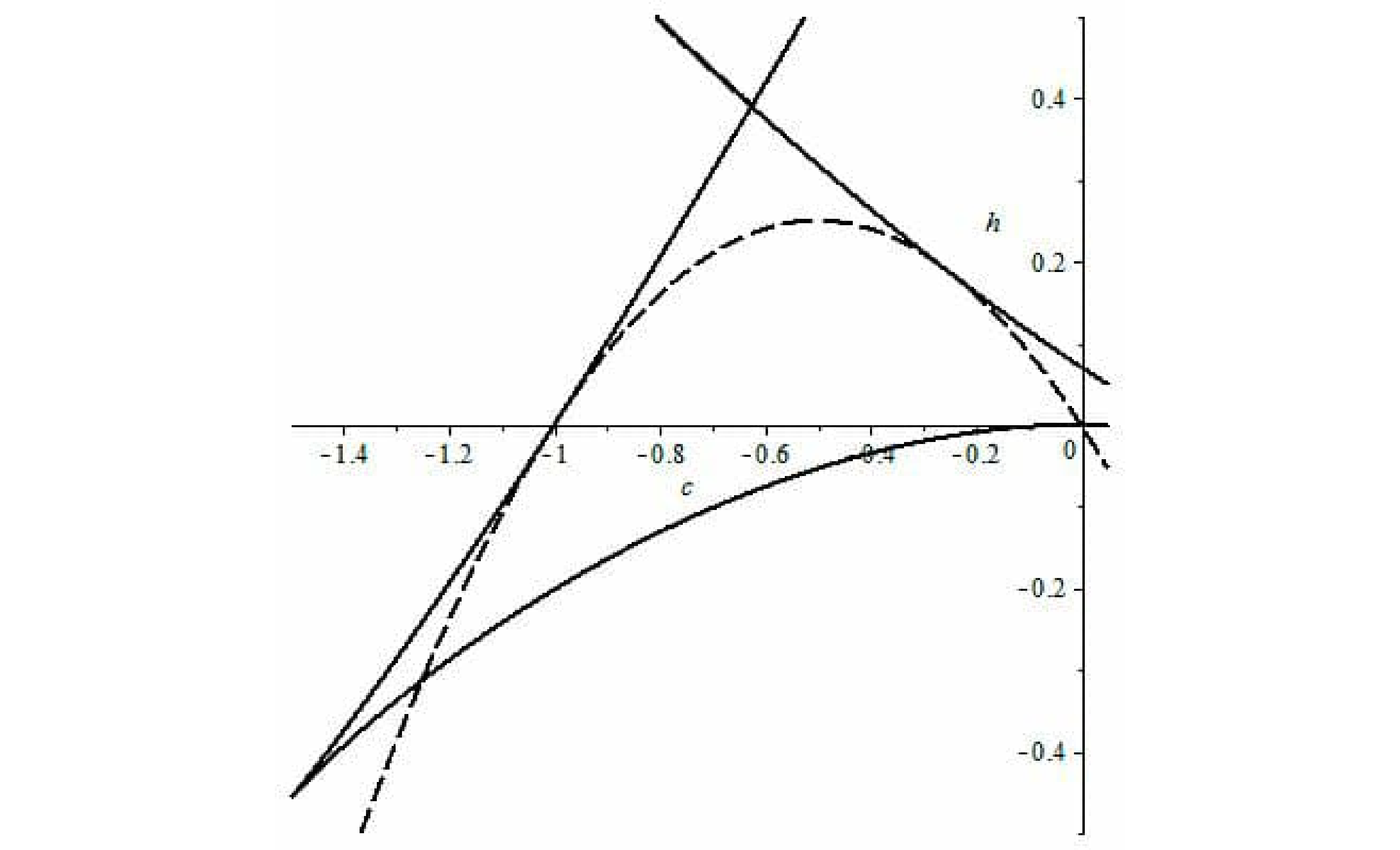}}}
\caption{ The sets $\tilde{E}_4|_{a=1}$ and $\Pi _4|_{a=1}$ for ($b=-1$) (global view).}
\label{figd46}
\end{figure}
\begin{figure}[H]
\centerline{\hbox{\includegraphics[scale=0.4]{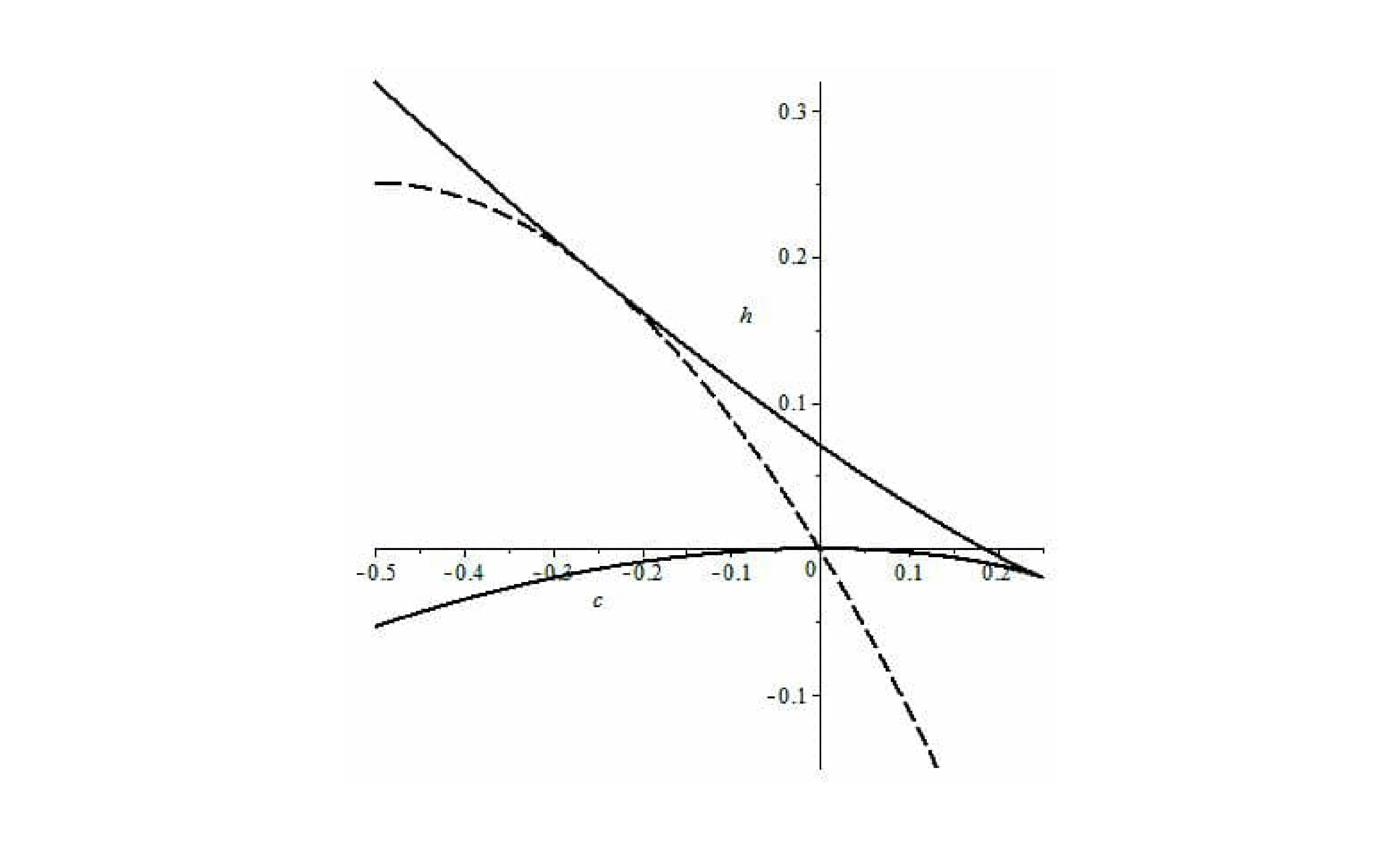}}}
\caption{The sets $\tilde{E}_4|_{a=1}$ and $\Pi _4|_{a=1}$ for $b=-1$ (local view).}
\label{figd47}
\end{figure}
\begin{figure}[H]
\centerline{\hbox{\includegraphics[scale=0.4]{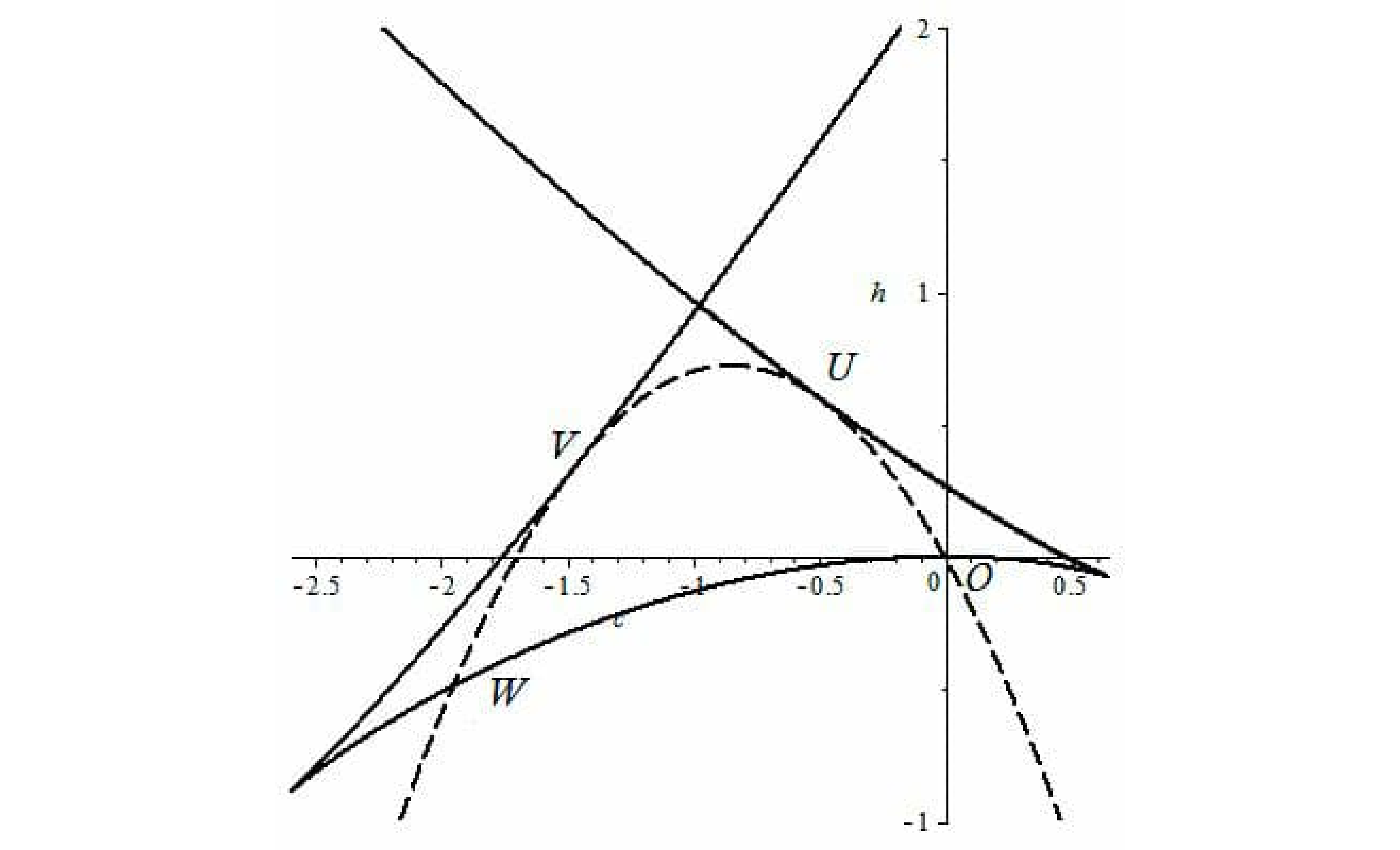}}}
\caption{The sets $\tilde{E}_4|_{a=1}$ and $\Pi _4|_{a=1}$ for ($b=-0.5$) (global view).}
\label{figd48}
\end{figure}
\begin{figure}[H]
\centerline{\hbox{\includegraphics[scale=0.4]{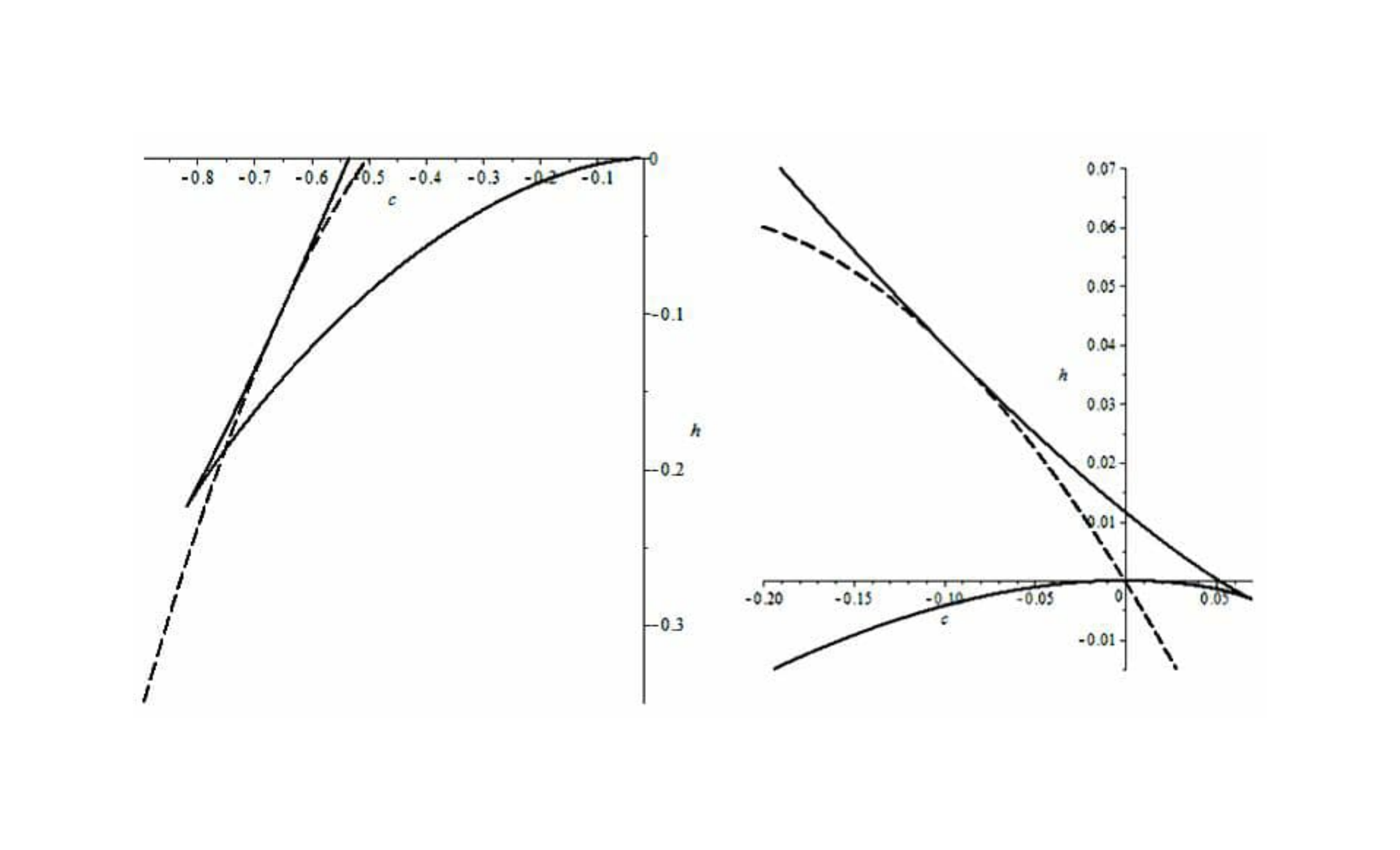}}}
\caption{The sets $\tilde{E}_4|_{a=1}$ and $\Pi _4|_{a=1}$ for $b=-0.5$ (local views).}
\label{figd49}
\end{figure}
\begin{figure}[H]
\centerline{\hbox{\includegraphics[scale=0.4]{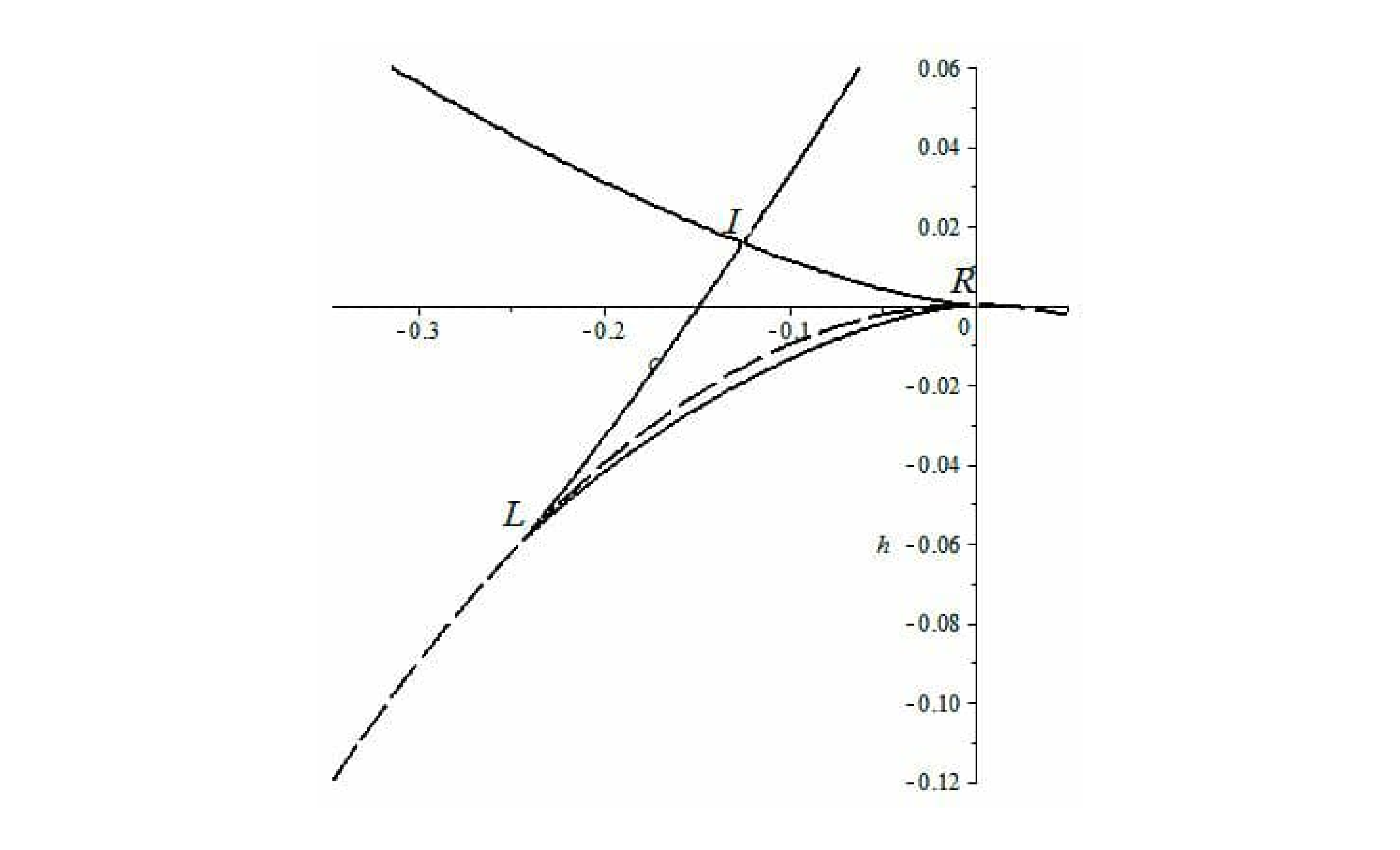}}}
\caption{The sets $\tilde{E}_4|_{a=1}$ and $\Pi _4|_{a=1}$ for $b=0$ (global view).
}
\label{figd410}
\end{figure}
\begin{figure}[H]
\centerline{\hbox{\includegraphics[scale=0.4]{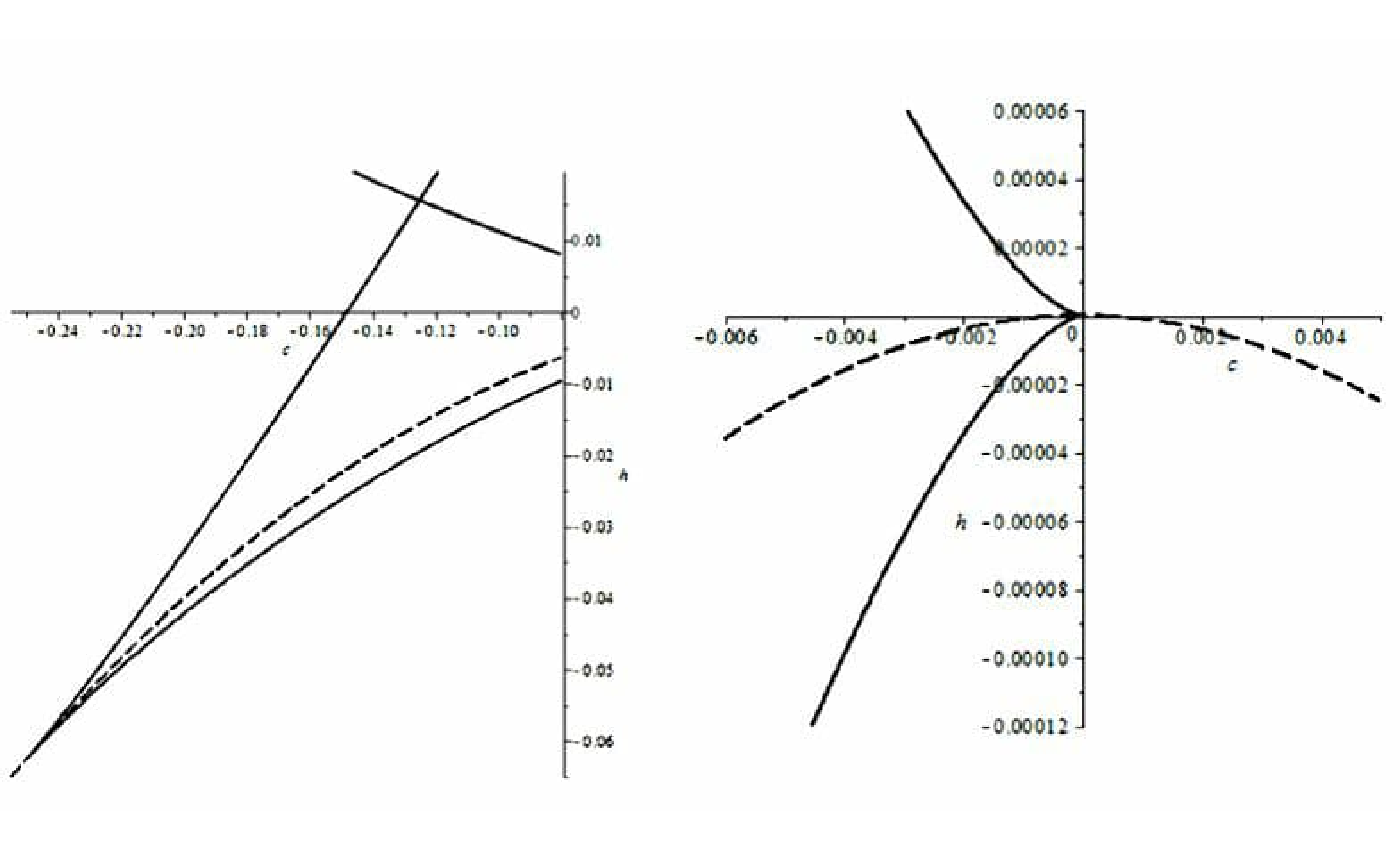}}}
\caption{
The sets $\tilde{E}_4|_{a=1}$ and $\Pi _4|_{a=1}$ for $b=0$ (local views).}
\label{figd411}
\end{figure}
In Fig.~\ref{figd412} we show the sets $\Pi _4|_{a=1}$ and $\tilde{E}_4|_{a=1}$
for $b=1/8=0.125$.
The sets $\Pi _4|_{a=1}$ and $\tilde{E}_4|_{a=1}$ have a fourth
order tangency at the point $(c,h)=(-1/16,-3/256)=(-0.0625,-0.01171875)$. The corresponding
polynomial equals $(x-1/4)(x+1/4)^2(x+3/4)$. For $b\in (0,1/8)$, these
two sets have two points of second order tangency and they appear very close to one another.
\begin{figure}[H]
\centerline{\hbox{\includegraphics[scale=0.4]{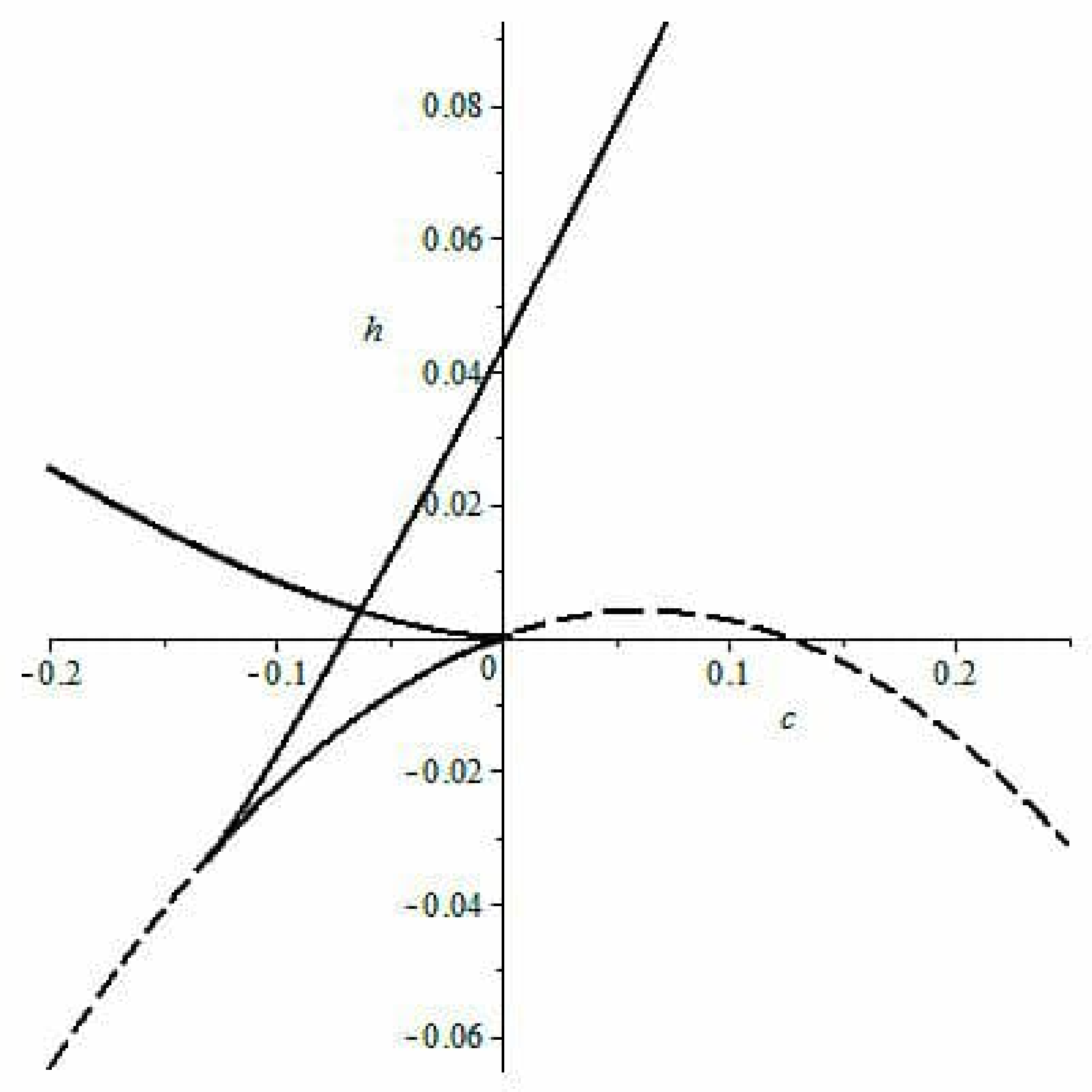}}}
\caption{The sets $\tilde{E}_4|_{a=1}$ and $\Pi _4|_{a=1}$ for $b=0.125$ (global view).}
\label{figd412}
\end{figure}

\begin{figure}[H]
\centerline{\hbox{\includegraphics[scale=0.4]{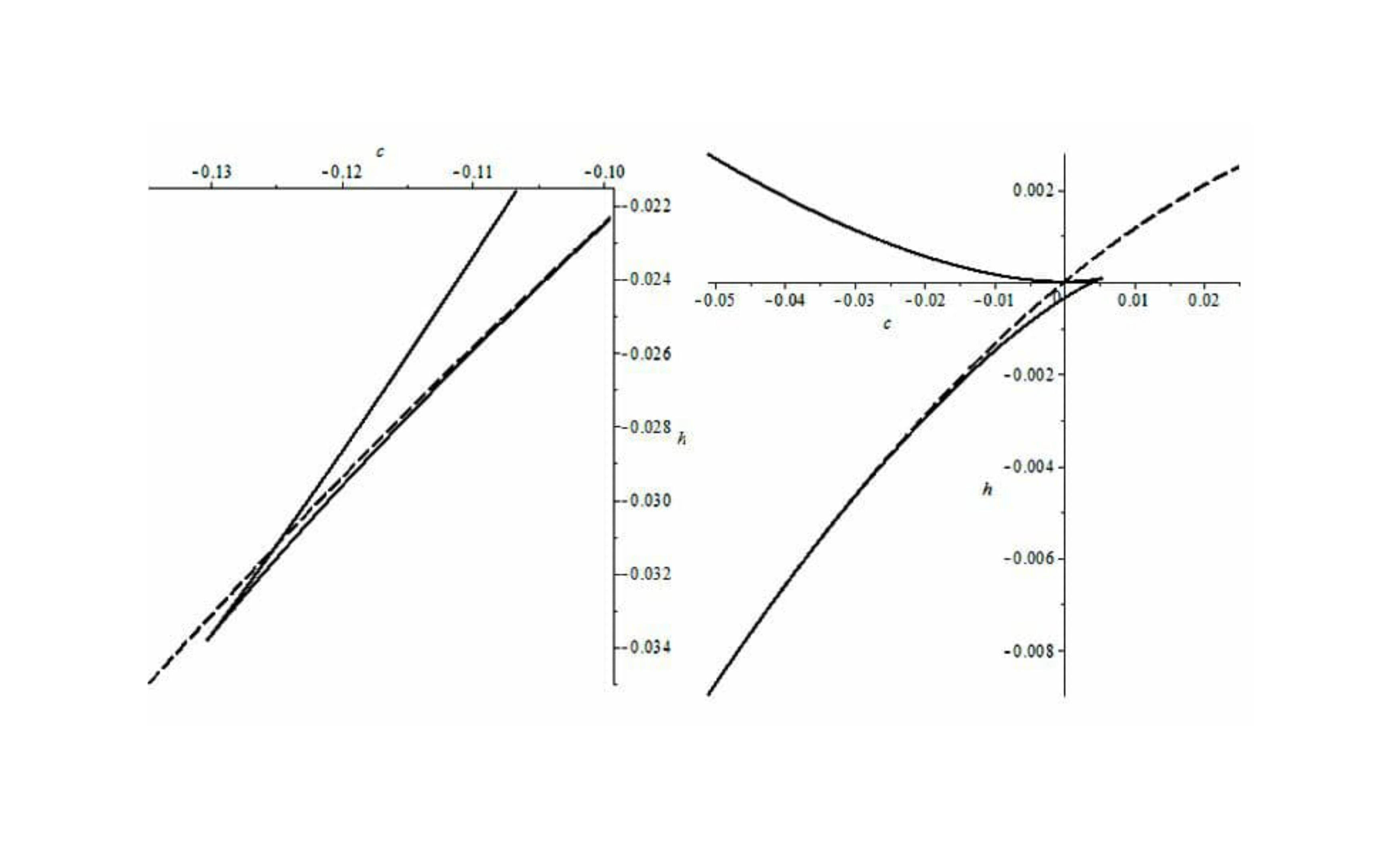}}}
\caption{The sets $\tilde{E}_4|_{a=1}$ and $\Pi _4|_{a=1}$ for $b=0.125$ (local views).}
\label{figd413}
\end{figure}
\begin{figure}[H]
\centerline{\hbox{\includegraphics[scale=0.4]{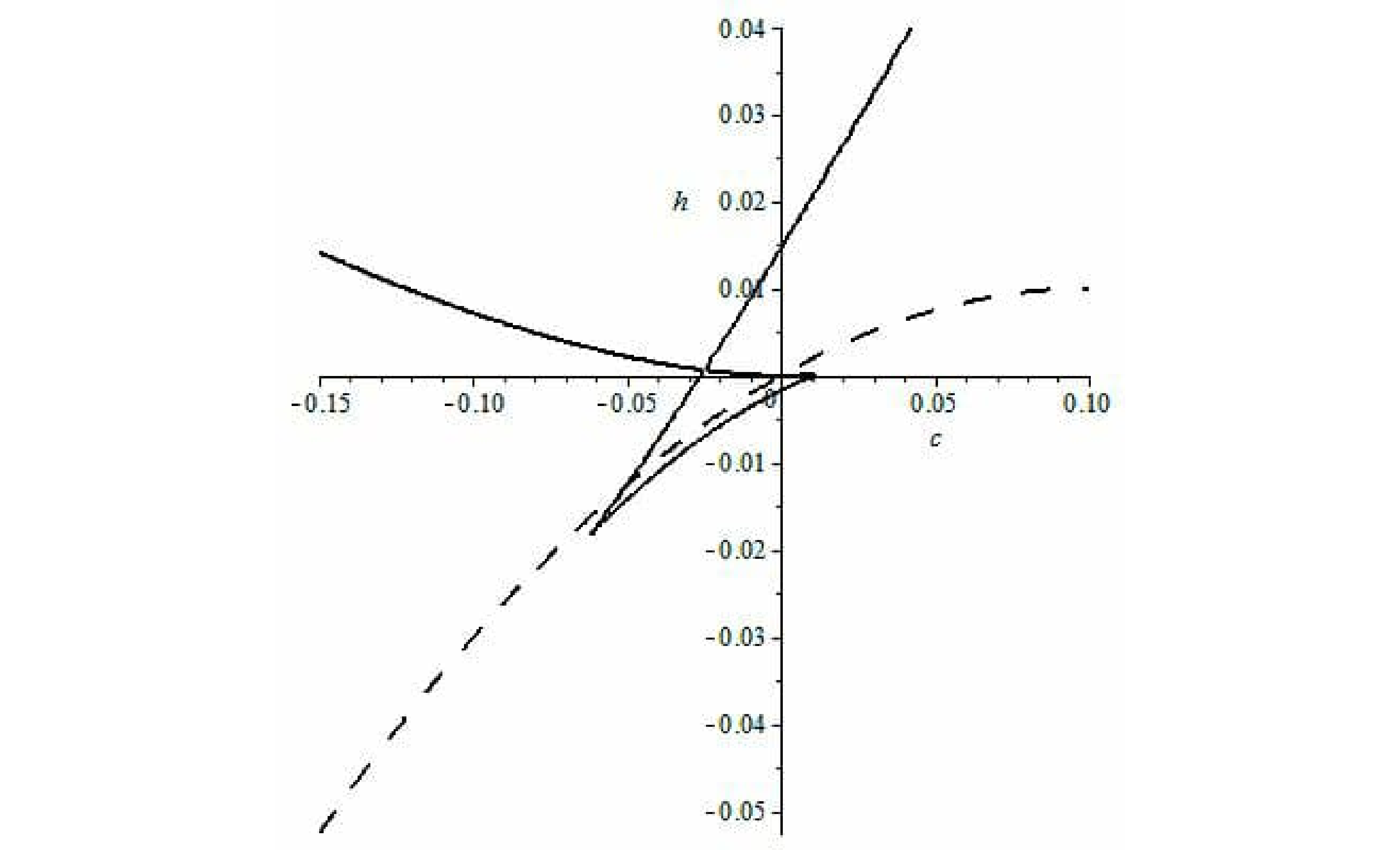}}}
\caption{The sets $\tilde{E}_4|_{a=1}$ and $\Pi _4|_{a=1}$ for $b=0.2$ (global view).}
\label{figd414}
\end{figure}
\begin{figure}[H]
\centerline{\hbox{\includegraphics[scale=0.4]{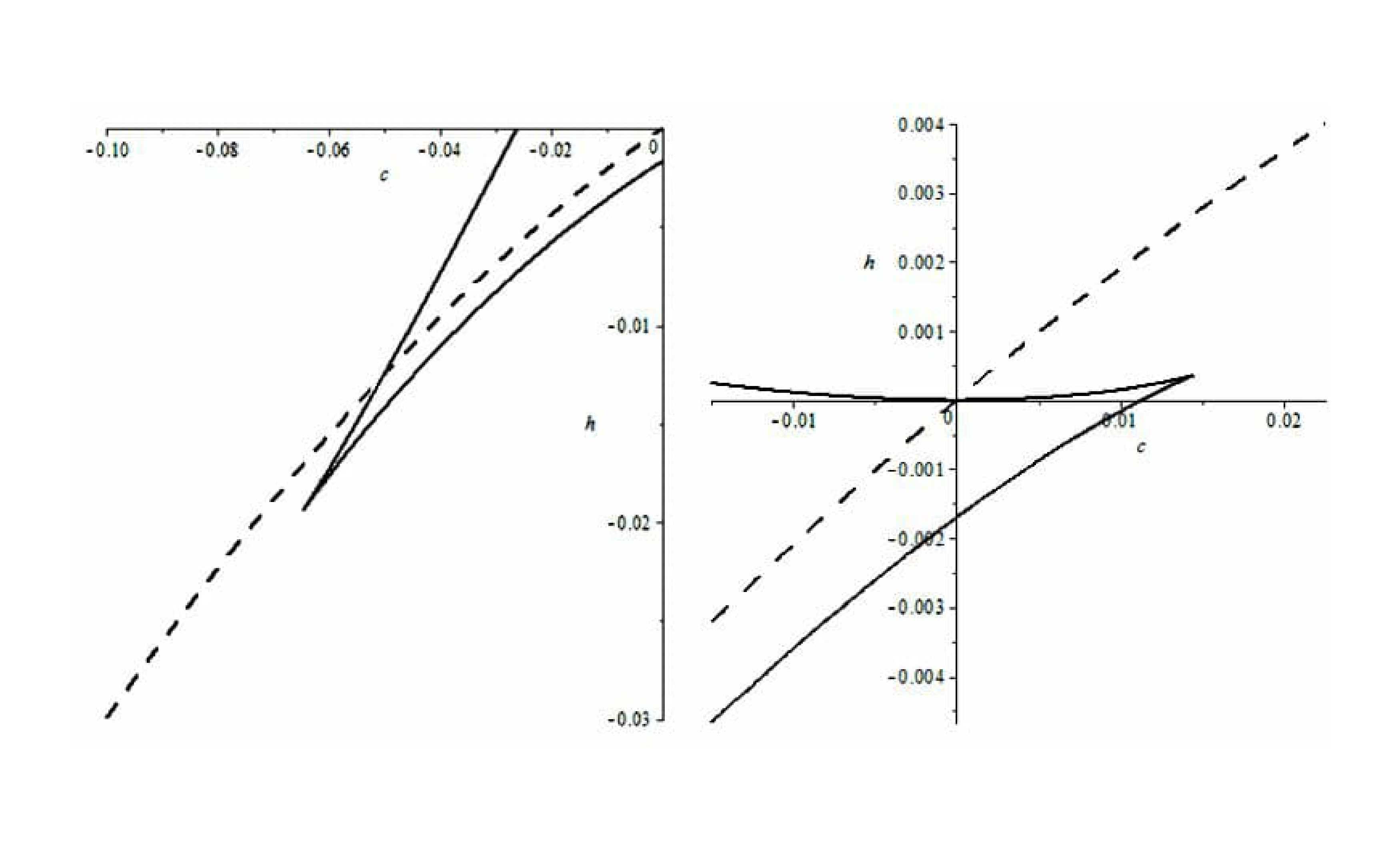}}}
\caption{The sets $\tilde{E}_4|_{a=1}$ and $\Pi _4|_{a=1}$ for $b=0.2$ (local views).}
\label{figd415}
\end{figure}
\begin{figure}[H]
\centerline{\hbox{\includegraphics[scale=0.4]{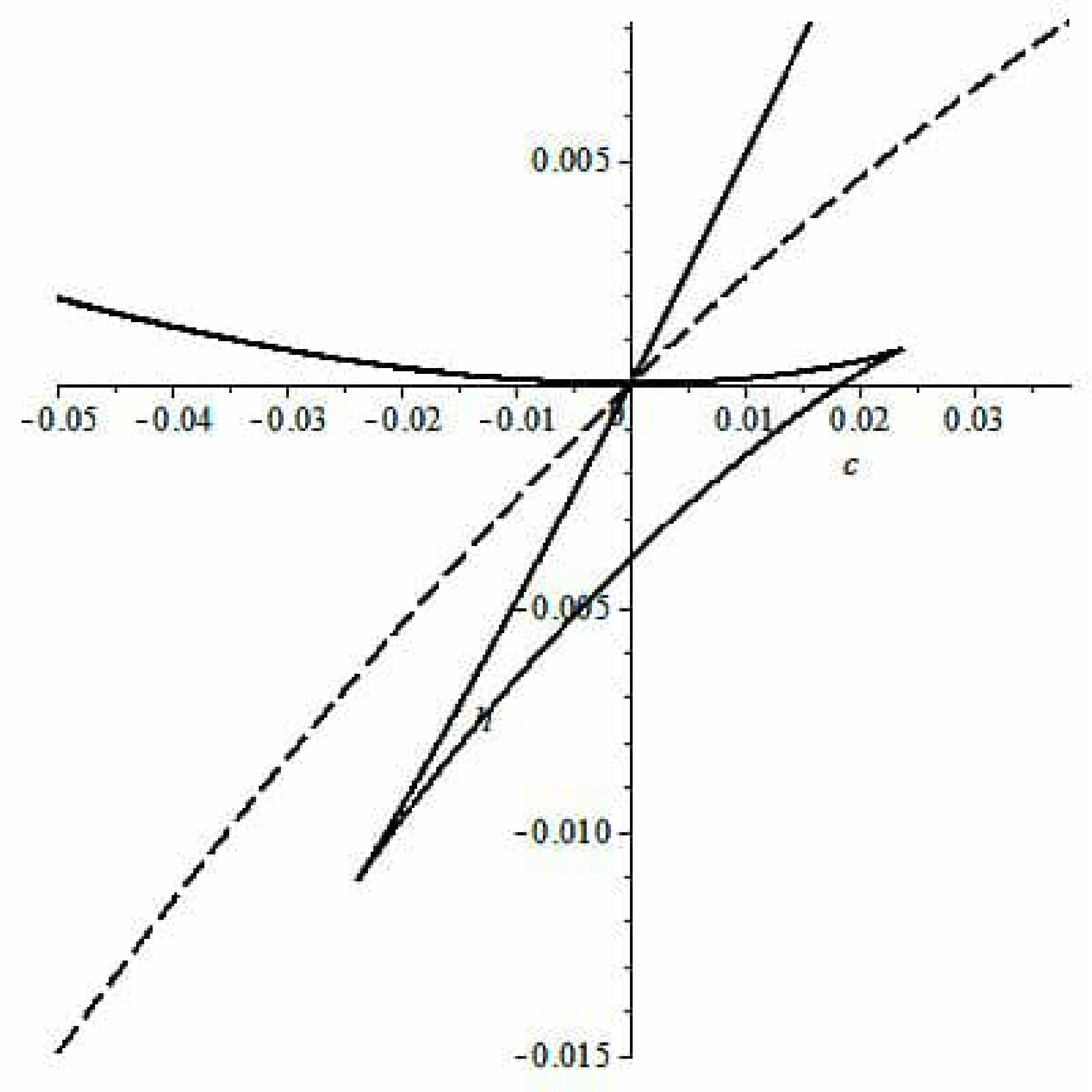}}}
\caption{The sets $\tilde{E}_4|_{a=1}$ and $\Pi _4|_{a=1}$ for $b=0.25$ (global view).}
\label{figd416}
\end{figure}
\begin{figure}[H]
\centerline{\hbox{\includegraphics[scale=0.4]{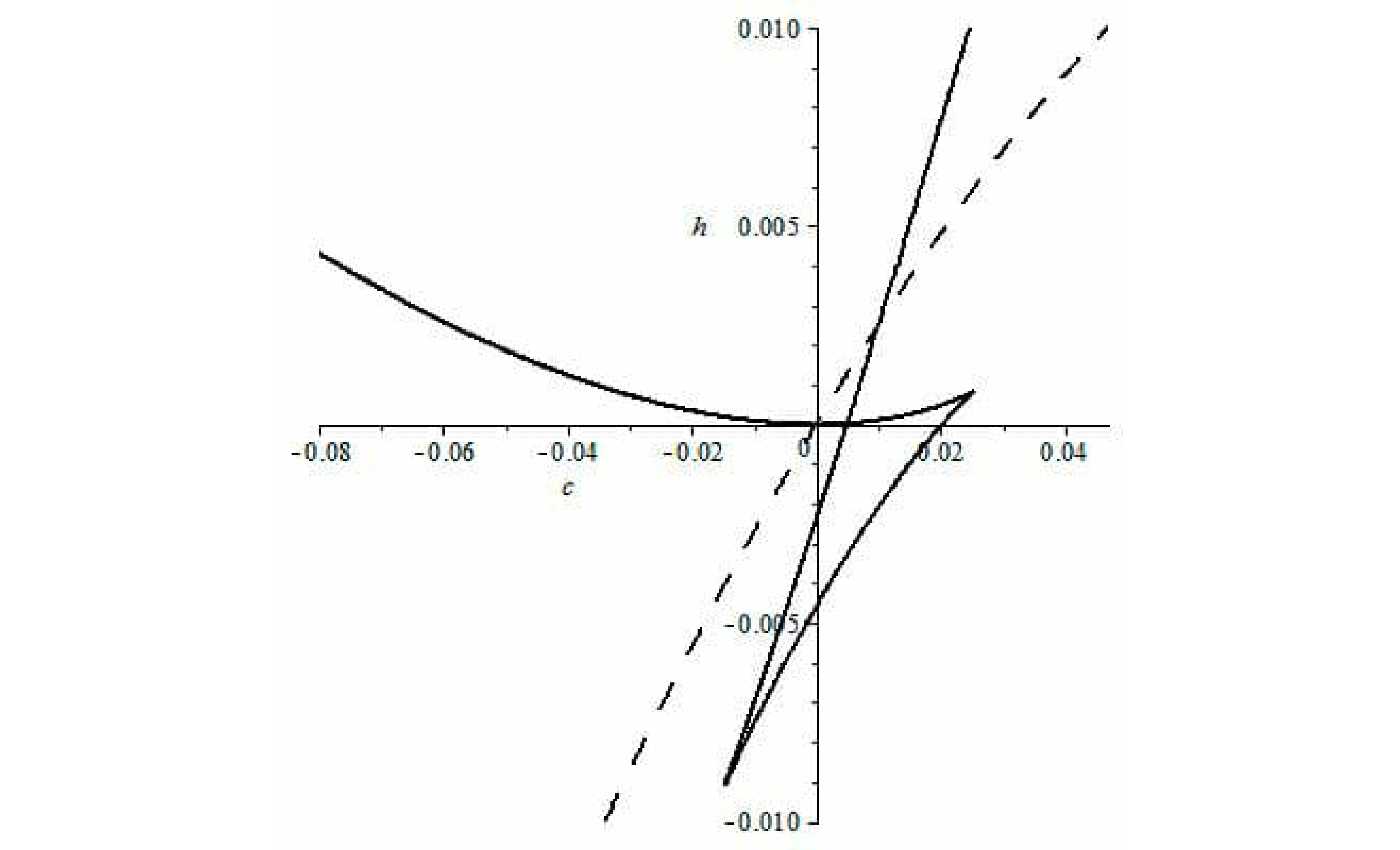}}}
\caption{The sets $\tilde{E}_4|_{a=1}$ and $\Pi _4|_{a=1}$ for $b=0.26$ (global view).}
\label{figd417}
\end{figure}
\begin{figure}[H]
\centerline{\hbox{\includegraphics[scale=0.4]{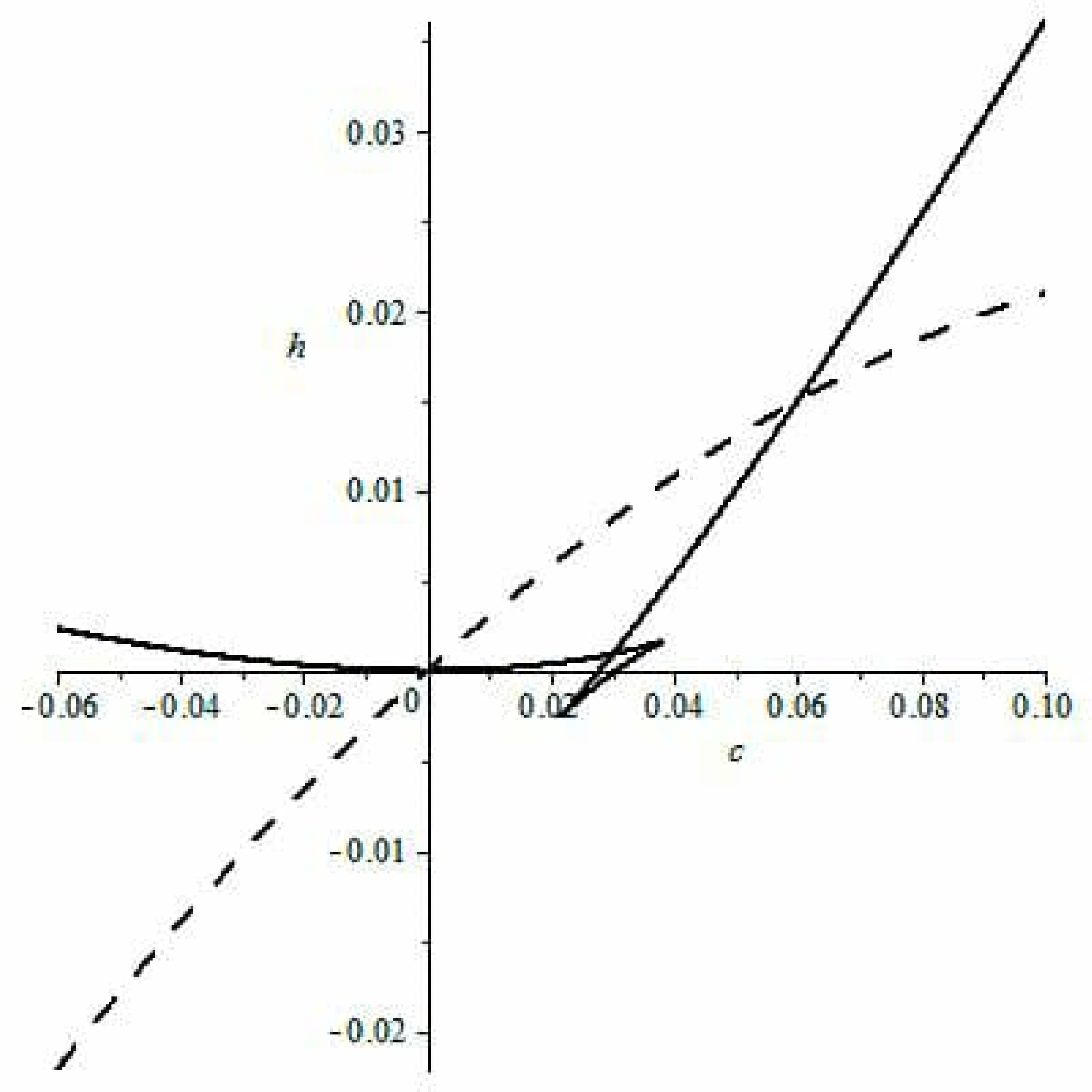}}}
\caption{The sets $\tilde{E}_4|_{a=1}$ and $\Pi _4|_{a=1}$ for $b=0.31$ (global view).}
\label{figd418}
\end{figure}
\begin{figure}[H]
\centerline{\hbox{\includegraphics[scale=0.4]{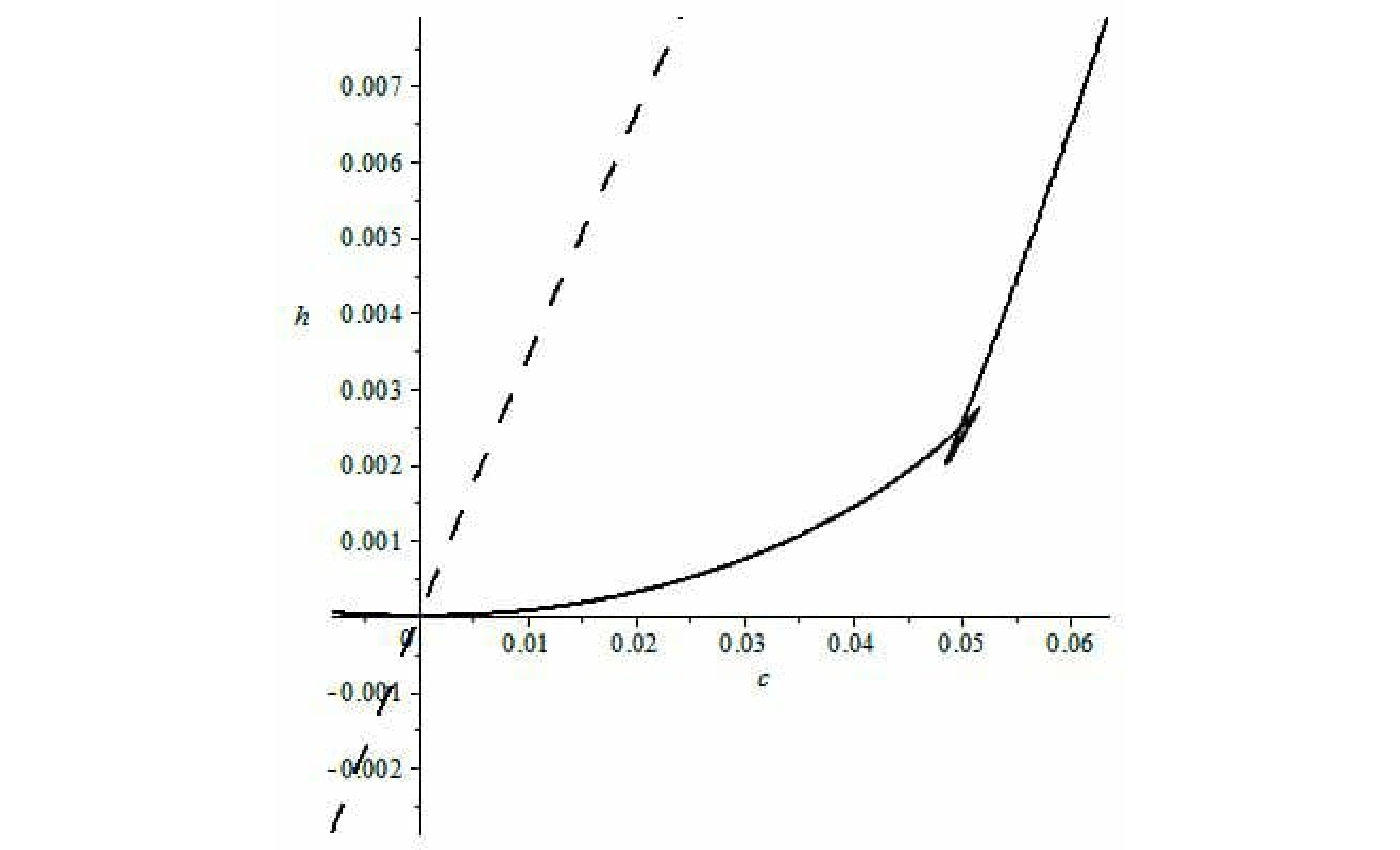}}}
\caption{The sets $\tilde{E}_4|_{a=1}$ and $\Pi _4|_{a=1}$ for $b=0.35$ (global view).}
\label{figd419}
\end{figure}
Set $Q_4^{\ddagger}:=Q_4|_{a=1,b=b_0}$. Although Figures~\ref{figd420} and
\ref{figd421} are much alike, the curve defined by the equation Res$(Q^{\ddagger}_4,(Q^{\ddagger}_4)',x)=0$
(drawn in solid line) is smooth for $b_0=0.4$ whereas for $b_0=3/8$
it has a $(4/3)$-singularity at $(c,h)=(1/16,1/256)$. For $a=1$, the hypersurface Res$(Q_4,Q_4',x)=0$ has a swallowtail
singularity at $(3/8,1/16,1/256)$, see about swallowtail catastrophe in~\cite{PoSt}.
For $b_0=0.4$, the set Res$(Q^{\ddagger}_4,(Q^{\ddagger}_4)',x)=0$
contains also the isolated point $(2/5,3/40,9/1600)$ (see the point $B$ in
Fig.~\ref{figd421}) at which the
 polynomial $Q^{\ddagger}_4$ has a double complex conjugate pair:
$$x^4+x^3 + 2x^2/5 + 3x/40 + 9/1600=(x^2+x/2+3/40)^2~.$$
\begin{figure}[H]
\centerline{\hbox{\includegraphics[scale=0.3]{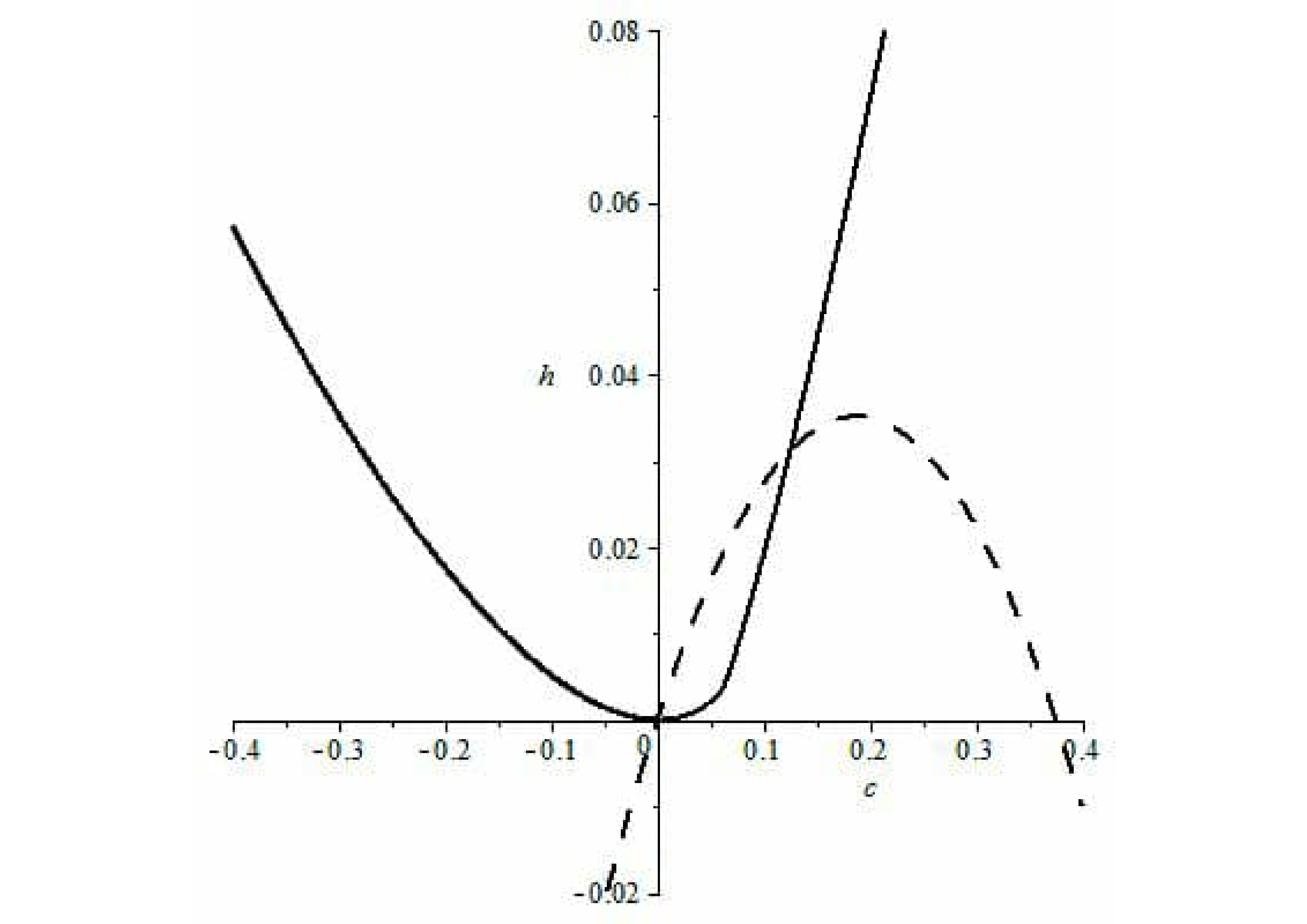}}}
\caption{The sets $\tilde{E}_4|_{a=1}$ and $\Pi _4|_{a=1}$ for $b=3/8=0.375$ (global view).}
\label{figd420}
\end{figure}
\begin{figure}[H]
\centerline{\hbox{\includegraphics[scale=0.4]{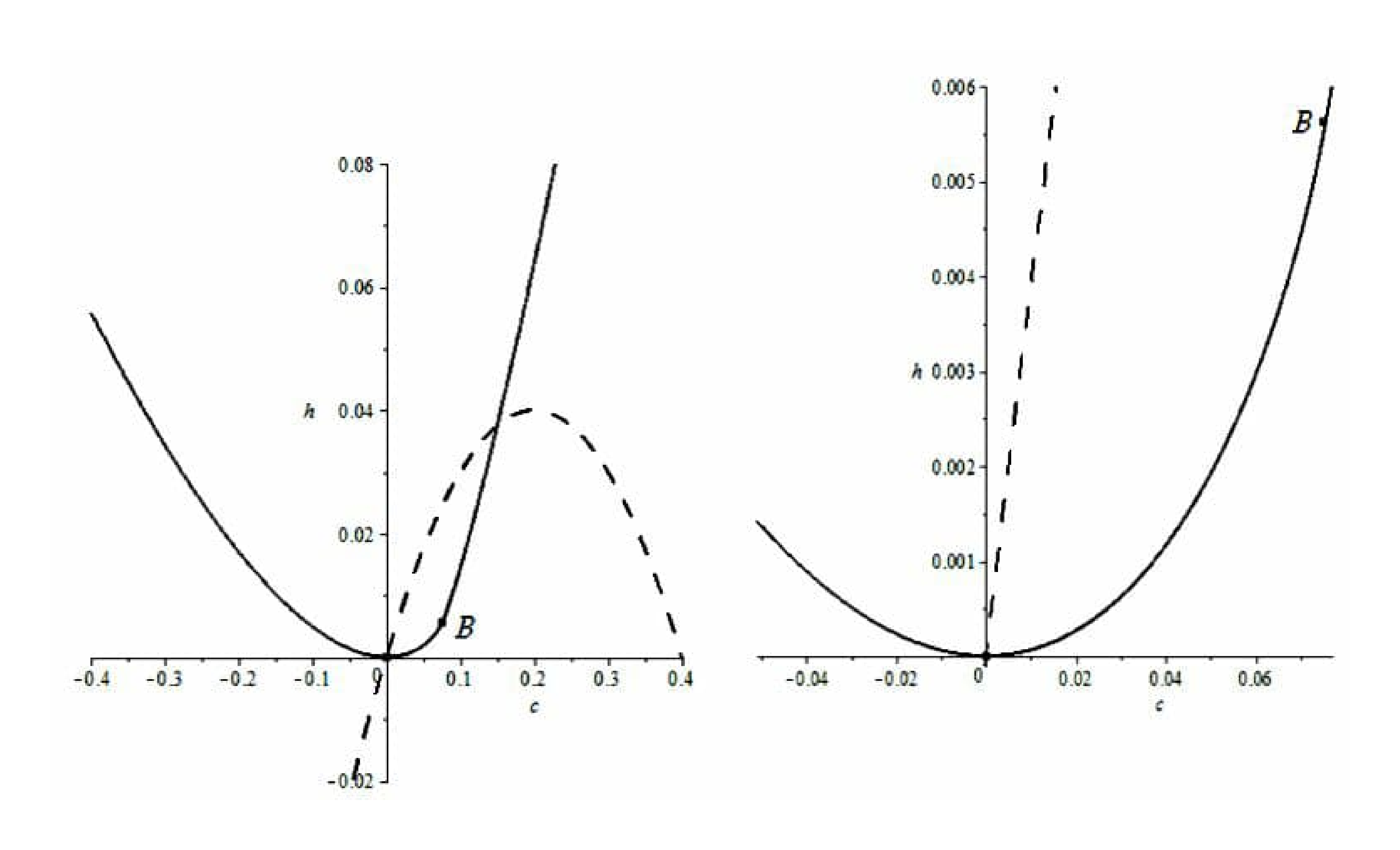}}}
\caption{The sets $\tilde{E}_4|_{a=1}$ and $\Pi _4|_{a=1}$ for $b=0.4$ (global and local view).}
\label{figd421}
\end{figure}
\subsection{The sets $\Pi _4|_{a=0}$ and
$\tilde{E}_4|_{a=0}$\protect\label{subseca0}}
We consider the polynomial $Q_4^0:=x^4+bx^2+cx+h$. Set $Q_4^{\triangle}:=Q_4^0|_{b=b_0}$.
We show the sets Res$(Q_4^{\triangle},(Q_4^{\triangle})',x)=0$ for $b_0=-1$, $0$ and $1$,
 see Fig.~\ref{figd422}, \ref{figd423} and \ref{figd424}. The hypersurface Res$(Q_4^0,(Q_4^0)',x)=0$ has a swallowtail
 singularity at the origin, see about swallowtail catastrophe in~\cite{PoSt}. The curve
in solid line in Fig.~\ref{figd423} has a $4/3$-singularity at the origin.
The point $(b,c,h)=(1,0,1/4)$ (this is the point $N$ in Fig.~\ref{figd424}) represents a polynomial
having two conjugate double imaginary roots. We denote the set $E_4|_{a=0}$ in dashed line (see Fig.~\ref{figd422}) and the set $F_4|_{a=0}$ in dotted line
(see Fig.~\ref{figd422}, \ref{figd423} and~\ref{figd424}).
The set $\{ a=c=0,~h=b^2/4\}$ is represented in Fig.~\ref{figd422} by the
self-intersection point of the discriminant set
$\Delta _4|_{a=0}$, in Fig.~\ref{figd423} by the origin and in Fig.~\ref{figd424}
by the point~$N$.
\begin{figure}[H]
\centerline{\hbox{\includegraphics[scale=0.4]{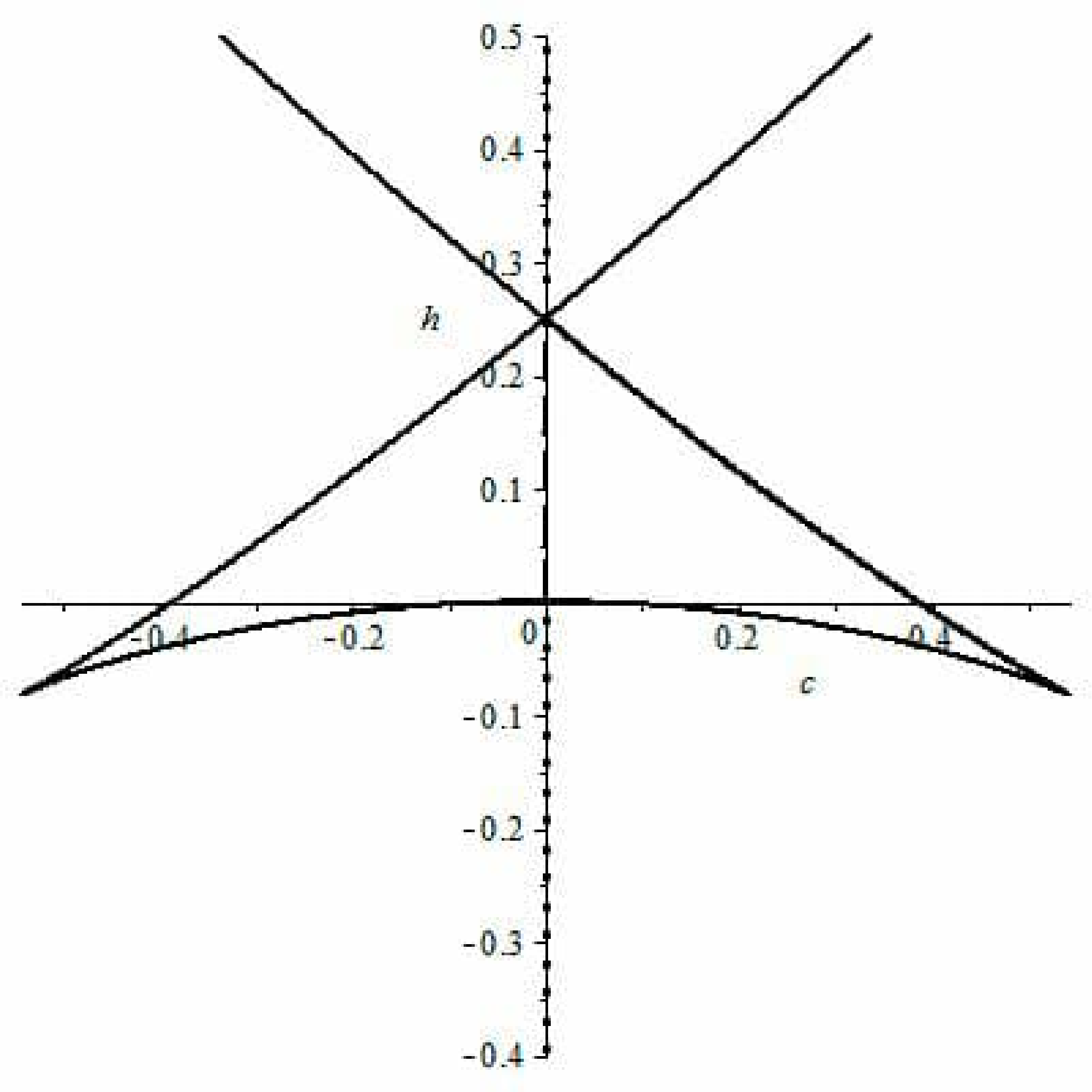}}}
\caption{The set $\Pi _4|_{a=0}$ for $b=-1$ (global view).}
\label{figd422}
\end{figure}
\begin{figure}[H]
\centerline{\hbox{\includegraphics[scale=0.4]{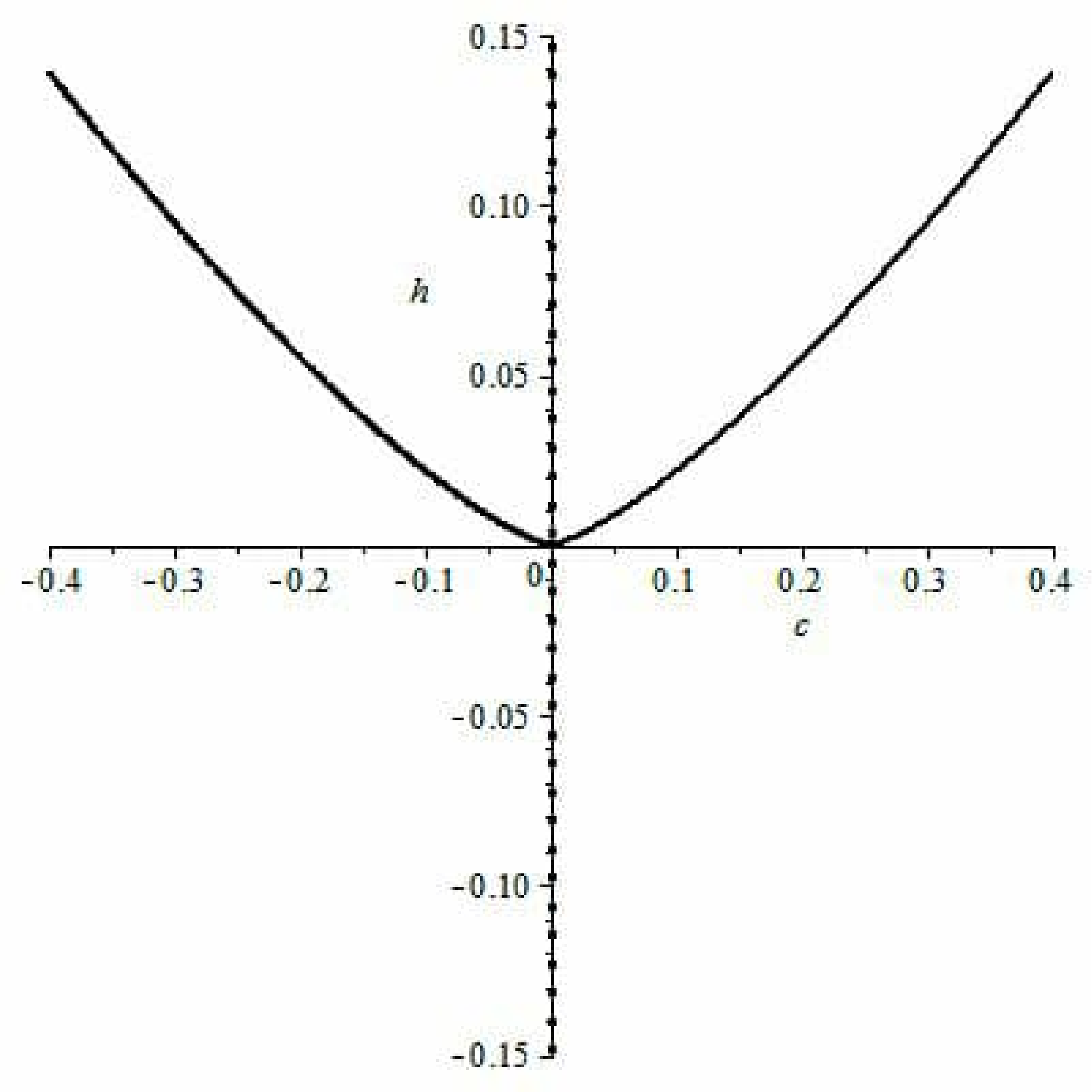}}}
\caption{The set $\Pi _4|_{a=0}$ for $b=0$ (global view).}
\label{figd423}
\end{figure}
\begin{figure}[H]
\centerline{\hbox{\includegraphics[scale=0.4]{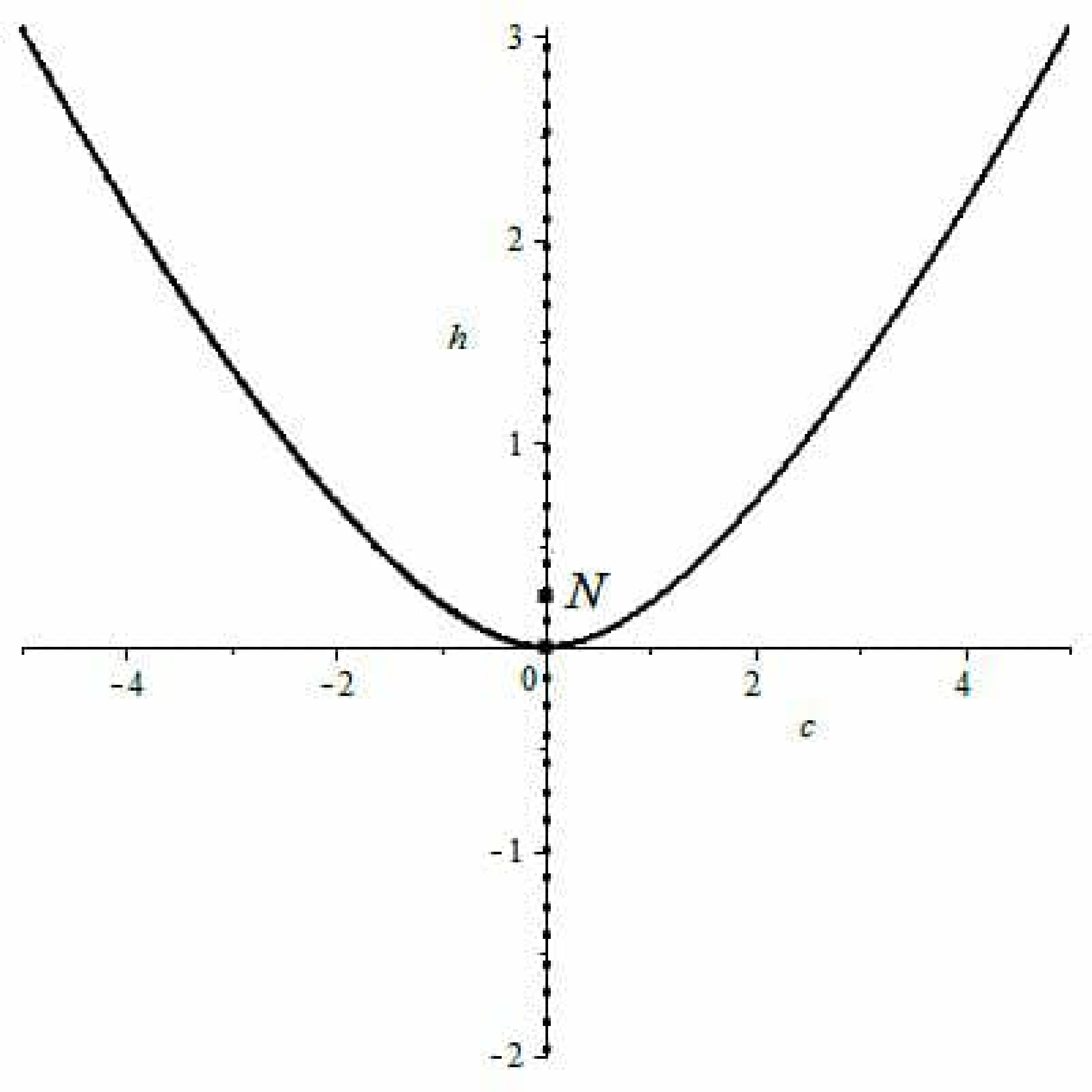}}}
\caption{The set $\Pi _4|_{a=0}$ for $b=1$ (global view).}
\label{figd424}
\end{figure}

\end{document}